\numberwithin{equation}{section}
\newcounter{hours}\newcounter{minutes}
\theoremstyle{plain}
\newtheorem{theorem}{Theorem}[section]
\newtheorem{lemma}[theorem]{Lemma}
\newtheorem{cor}[theorem]{Corollary}
\newtheorem{prop}[theorem]{Proposition}
\newtheorem*{problem}{Problem}
\theoremstyle{definition}                  
\newtheorem{remark}[theorem]{Remark}
\def\dive{\textnormal{div}}
\def\fin{f_{\textnormal{in}}}
\title{Hardy's inequality and (almost) the Landau equation}
\begin{document}
	
\author{Maria Gualdani and Nestor Guillen}	
\thanks{ MPG is supported by DMS-2019335 and would like to
thank NCTS Mathematics Division Taipei for their kind hospitality. } 
\address{The University of Texas at Austin
Mathematics Department 
2515 Speedway Stop C1200
Austin, Texas 78712-1202}
\email{gualdani@math.utexas.edu }
\address{Texas State University
Department of Mathematics
601 University Drive
San Marcos, Texas 78666-4684}
\email{nestor@txstate.edu}

\date{\today}

\begin{abstract}
  In this manuscript we establish an $L^\infty$ estimate for the isotropic analogue of the homogeneous Landau equation. This is done for values of the interaction exponent $\gamma$ in (a part of) the range of very soft potentials. The main observation in our proof is that the classical weighted Hardy inequality leads to a weighted Poincar\'e inequality, which in turn implies the propagation of some $L^p$ norms of solutions. From here, the $L^\infty$ estimate follows from certain weighted Sobolev inequalities and De Giorgi-Nash-Moser theory. 
\end{abstract}

\maketitle

\baselineskip=14pt
\pagestyle{headings}		

\markboth{Hardy's inequality and (almost) the Landau equation}{M. Gualdani, N. Guillen}

\section{Introduction and main result}\label{Introduction}
 
We consider  the equation
\begin{align}   \label{eqn:isotropic equation} 
  \partial_t f  & = c_{d,\gamma} \dive \left (  \int_{\mathbb{R}{^d}}\frac{1}{ |v-v_*|^{-2-\gamma}}\left[ f(v_*,t) \nabla f(v,t) - f(v,t) \nabla_*f(v_*,t)\right]\;dv_*\right ),
\end{align}
where $f(v,t)$ is a non-negative function and the constant $c_{d,\gamma}$ is positive and only depends on the dimension and $\gamma$. 
The constant $\gamma$ belongs to the range of very soft potential, $\gamma \in [-d,-2]$, $d\ge 3$. 
Our chief interest in \eqref{eqn:isotropic equation} comes from its apparent similarity to the homogeneous Landau equation
\begin{align}\label{eqn:Landau}
\partial_t f  = c_{d,\gamma} \dive \left ( \int_{\mathbb{R}{^d}}\frac{\mathbb{P}(v-v_*)}{ |v-v_*|^{-2-\gamma}}\left[ f(v_*,t) \nabla f(v,t) - f(v,t) \nabla_*f(v_*,t)\right]\;dv_*\right ),\end{align}
where $\mathbb{P}(z)$ is the matrix kernel given by the projection into the orthogonal complement to $z$
\begin{align*}
\mathbb{P}(z) = \mathbb{Id} - \frac{z \otimes z }{|z|^2}, \quad z \in  \mathbb{R}^d\setminus \{0\}.
\end{align*}
The term $\mathbb{P}(z)$ in \eqref{eqn:Landau}  is an anisotropy, and it is not present in \eqref{eqn:isotropic equation}.  Therefore one could consider \eqref{eqn:isotropic equation} as an isotropic version of the Landau equation \eqref{eqn:Landau}. 

Equation \eqref{eqn:isotropic equation} was first introduced by Krieger and Strain in \cite{KriStr2012}, motivated precisely by this relationship, with the hope that an understanding of \eqref{eqn:isotropic equation} might lead to new insights on  \eqref{eqn:Landau}.  In \cite{KriStr2012} the authors considered a modification of \eqref{eqn:isotropic equation},  for $\gamma =-d=-3$,
\begin{align}\label{eqn:isotropic equation with alpha}
  \partial_t f = a[f]\Delta f+\alpha f^2,\;\alpha \in [0,1],
\end{align}
where $a[f] :=  f \ast |v|^{-1}$. Here, (\ref{eqn:isotropic equation with alpha}) with $\alpha=1$ corresponds to \eqref{eqn:isotropic equation}. In \cite{KriStr2012}, the authors manage to show that, for $\alpha < 3/4$, spherically symmetric and radially decreasing solutions to \eqref{eqn:isotropic equation with alpha} do not form singularities in finite time, and in fact become smooth. Shortly after, in \cite{GreKriStr2012}, Gressman, Krieger, and Strain improved this result to the range $\alpha<74/75$,  thanks to a new nonlocal inequality for $\gamma=-d$, 
\begin{align*}
  \int_{\mathbb{R}^d}f^{p+1}\;dv \leq \frac{(p+1)^2}{p^2}\int_{\mathbb{R}^d}a[f]|\nabla f^{\frac{p}{2}}|\;dv.
\end{align*}
The authors of the present manuscript showed later in \cite{GuGu15} that for $\alpha=1$ spherically symmetric and radially decreasing solutions to \eqref{eqn:isotropic equation with alpha} regularize instantaneously and remain smooth for all times. Moreover, the approach in \cite{GuGu15} covers \eqref{eqn:isotropic equation with alpha} for all $\alpha \in [0,1]$ and also provides an $L^\infty$ result for spherically symmetric solutions of \eqref{eqn:Landau}, conditioned on a certain spectral bound holding (for more on this, see the discussion about \eqref{eqn:Landau eigenvalue problem} below). Later, in \cite{GuGu16}, we make use of the fact that $a[f]$ is an $A_1$ weight and prove an $L^\infty$-estimate for weak solutions of  \eqref{eqn:isotropic equation}  and \eqref{eqn:Landau}  for $\gamma \geq -2$ and a {{conditional}} $L^\infty$-estimate for solutions of the equation for $\gamma \in [-d,-2)$. 

The aim of this paper is to show a new $L^\infty$-estimate for solutions to \eqref{eqn:isotropic equation} when $\gamma \in (\gamma_*,-2]$, $\gamma_*$ being specified bellow. Here is the main theorem:

\begin{theorem}\label{theorem:isotropic global smooth solutions} 

 Let $f:\mathbb{R}^d\times [0,T] \to \mathbb{R}$ be a non-negative classical solution of \eqref{eqn:isotropic equation} with $\gamma \in \left[ -1-\frac{d}{2},-2\right]$ and initial data $\fin$ that belongs to $L_2^1\cap L^1(\mathbb{R}^d)$.  
 
   \begin{enumerate}
    \item For any $p$ with $1\leq p \leq \frac{d+\gamma}{-2-\gamma}$, the norm $\|f(t)\|_{L^p(\mathbb{R}^d)}$ is non-increasing in time.
	
	\item  Let $\gamma_* \in \left(-1-\frac{d}{2}, -2\right)$ be the unique solution to 
	\begin{align*}
    \tfrac{d}{d+\gamma+2}= \tfrac{d+\gamma}{-\gamma-2}.  
  \end{align*}
If $\fin$ also belongs to $L^p(\mathbb{R}^d)$  for some  $p> \tfrac{d}{d+\gamma+2}$, any solution to  \eqref{eqn:isotropic equation} for $\gamma \in (\gamma_*, -2]$ is uniformly bounded for times away from zero, and 
    \begin{align*}
      \sup \limits_{B_R\times [\tau,T]} f(v,t) \leq C(d,\gamma, \fin,R,\tau,T).
    \end{align*}	
    In particular, for $\gamma \in (\gamma_*,-2]$, classical solutions of \eqref{eqn:isotropic equation} remain smooth for every finite time.
  \end{enumerate}

\end{theorem}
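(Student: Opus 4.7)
The plan is to extract both conclusions from a single reformulation of the equation combined with the classical weighted Hardy inequality. Integrating by parts in $v_*$, equation \eqref{eqn:isotropic equation} rewrites in non-divergence form as
\[
\partial_t f = c_{d,\gamma}\bigl(a[f]\,\Delta f + A\,f\,(f*|v|^\gamma)\bigr),
\]
where $a[f]:=f*|v|^{2+\gamma}$ and $A:=-(2+\gamma)(d+\gamma)\geq 0$ for $\gamma\in[-d,-2]$. In this form, the interplay between the weighted diffusion $a[f]$ and the reaction coefficient $f*|v|^\gamma$ matches exactly the weights appearing in a classical Hardy inequality, which will be the engine of the whole argument.

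For Part (1), I would test the equation against $f^{p-1}$ and integrate by parts to obtain an identity of the form
\[
\tfrac{d}{dt}\|f\|_{L^p}^p = -c_1(p)\int a[f]\,|\nabla f^{p/2}|^2\,dv + c_2(p)\int f^p(f*|v|^\gamma)\,dv,
\]
with explicit positive constants $c_1(p)$ and $c_2(p)$. The key ingredient is the classical weighted Hardy inequality
\[
\int_{\mathbb{R}^d}|v-v_*|^\gamma\,u(v)^2\,dv \leq \tfrac{4}{(d+\gamma)^2}\int_{\mathbb{R}^d}|v-v_*|^{2+\gamma}\,|\nabla u|^2\,dv,
\]
applied for each fixed $v_*$ with $u=f^{p/2}$. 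Averaging against $f(v_*)\,dv_*$ produces the weighted Poincar\'e inequality
\[
\int f^p(f*|v|^\gamma)\,dv \leq \tfrac{4}{(d+\gamma)^2}\int a[f]\,|\nabla f^{p/2}|^2\,dv.
\]
Substituting, the coefficient of the dissipation has a favorable sign precisely when $p\cdot\tfrac{-\gamma-2}{d+\gamma}\leq 1$, i.e.\ when $p\leq\tfrac{d+\gamma}{-\gamma-2}$, which is the stated range.

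For Part (2), I fix $p_0$ in the overlap $\bigl(\tfrac{d}{d+\gamma+2},\tfrac{d+\gamma}{-\gamma-2}\bigr]$, which is non-empty exactly when $\gamma>\gamma_*$. Part (1) then guarantees that $\|f(t)\|_{L^{p_0}}$ stays bounded for all $t$. The threshold $p_0>\tfrac{d}{d+\gamma+2}$ is precisely the borderline at which a Hölder splitting of the convolution (combined with $f\in L^1$) yields $a[f]\in L^\infty(\mathbb{R}^d)$ uniformly in time, while mass conservation ensures $a[f]\geq c_R>0$ on every ball $B_R$. On such balls the equation becomes a linear parabolic PDE with uniformly elliptic diffusion $a[f]$ and reaction coefficient $f*|v|^\gamma$ in some $L^s_{\mathrm{loc}}$ (via Hardy-Littlewood-Sobolev). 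I would then run a local Moser iteration of De Giorgi-Nash-Moser type: starting from the weighted dissipation $\int a[f]|\nabla f^{p/2}|^2\,dv$, localizing by cutoffs, and applying a weighted Sobolev (Caffarelli-Kohn-Nirenberg) embedding, one bootstraps $L^{p_k}$ to $L^{p_{k+1}}$ with a fixed gain factor, and passes to the limit along $p_k\to\infty$ to obtain the claimed $L^\infty$ bound on $B_R\times[\tau,T]$.

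The main obstacle will be the global degeneracy of the diffusion: since $a[f](v)\sim M|v|^{2+\gamma}\to 0$ as $|v|\to\infty$, ellipticity is only local, and every iteration step must be localized, producing cutoff remainders whose dominant contribution is again of the form $\int f^p(f*|v|^\gamma)\,dv$. For $p$ beyond the Hardy-favorable range these terms can no longer be absorbed into the dissipation, so at each level of the iteration one must re-invoke Part (1), interpolating the large-$p$ norm against the already-propagated $L^{p_0}$ norm. In this sense Part (1) plays a double role: it is both the base case of the bootstrap and the structural mechanism that keeps the Moser scheme closed at every step.
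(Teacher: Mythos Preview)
Your Part~(1) is correct and essentially identical to the paper's argument: the weighted Hardy inequality, averaged against $f(v_*)\,dv_*$, gives exactly the weighted Poincar\'e inequality $(d+\gamma)\int h[f]\phi^2 \leq 4\int a[f]|\nabla\phi|^2$ (this is the paper's Lemma~\ref{lem:Hardy meets iso Landau}), and the $L^p$ monotonicity follows for $p\le (d+\gamma)/(-2-\gamma)$ just as you describe.

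Your Part~(2) has the right skeleton, but the final paragraph misidentifies the mechanism that closes the Moser iteration. For $p$ beyond the Hardy-favorable range you cannot ``re-invoke Part~(1)'' or interpolate the large-$p$ norm against $L^{p_0}$ in any direct way: the coefficient in front of the dissipation has the wrong sign, and interpolation does not repair that. What the paper does instead is prove an $\varepsilon$-Poincar\'e inequality
\[
\int h[f]\,\phi^2\,dv \;\leq\; \varepsilon\int a[f]\,|\nabla\phi|^2\,dv \;+\; C(\varepsilon)\int a[f]\,\phi^2\,dv,
\]
valid for \emph{every} $\varepsilon>0$. The propagated $L^{p_0}$ bound enters precisely here: $p_0>\tfrac{d}{d+\gamma+2}$ is the threshold that places $h[f]\in L^s$ with $s>\tfrac{d}{2}$ (via Hardy--Littlewood--Sobolev, as you already noted), and that integrability is exactly what allows $\varepsilon$ to be taken arbitrarily small. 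Choosing $\varepsilon\sim 1/p$ absorbs the reaction term at every level of the iteration, with the remainder $C(\varepsilon)\int a[f]\phi^2$ acting as a harmless lower-order term. So Part~(1) is used only once---to freeze $\|f(t)\|_{L^{p_0}}$---not as a recurring structural device inside the iteration.

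A smaller gap: mass conservation alone does not give $a[f]\geq c_R>0$ on $B_R$, since mass could leak to infinity. The paper first shows the second moment grows at most linearly in time (this uses $a[f]\in L^\infty$, which in turn follows from the $L^{p_0}$ bound), and only then obtains the lower bound $a[f]\gtrsim\langle v\rangle^{2+\gamma}$ from mass plus second moment.
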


 To illustrate what portion of the ``very soft potential'' range covered in the above theorem, observe that for $d=3$ we have $\gamma_*\approx -2.458$ while for $d=4$ we have $\gamma_*\approx -2.87$. 
The value $\gamma_*$ is uniquely defined in the range $(-d,-2)$.

The proof of Theorem \ref{theorem:isotropic global smooth solutions} is relatively straightforward. For that, we set up first some notation. Let $h[f]$ be the Riesz potential of $f$,
\begin{align*}
h[f]  := (-\Delta)^{-\frac{d+\gamma}{2}}f = C(d,d+\gamma)\int_{\mathbb{R}^d} \frac{f(v_*)}{|v-v_*|^{-\gamma}}\;dv_*,
\end{align*}
where $C(d,d+\gamma)$ is the normalization constant arising in the operator $(-\Delta)^{-\frac{d+\gamma}{2}}$, namely
\begin{align*}
C(d,\alpha) = \pi^{-\tfrac{d}{2}}2^{-\alpha}\frac{\Gamma\left ( \frac{d-\alpha}{2}\right )}{\Gamma\left (\frac{\alpha}{2} \right )}, \quad 0< \alpha < d.
\end{align*}
In particular, $h[f] \to f$ as $\gamma \to -d$. Next, we shall denote by $a[f]$ the convolution 
\begin{align*}
a[f]:= c_{d,\gamma}\int_{\mathbb{R}^d} \frac{f(v_*)}{|v-v_*|^{-2-\gamma}}\;dv_*.
\end{align*}
 With this notation, we rewrite   \eqref{eqn:isotropic equation} as
\begin{align*}
\partial_t f   =  \textrm{div} \left( a[f]\nabla f - f \nabla a[f]\right),
\end{align*}
or, in a non-divergence form, 
\begin{align*}
\partial_t f   =  a[f] \Delta f - (2+\gamma) f h[f],
\end{align*}
provided $c_{d,\gamma}$ is chosen as 
\begin{align*}
   c_{d,\gamma} := \frac{C(d,d+\gamma)}{d+\gamma}, \quad \textrm{or equivalently} \quad   c_{d,\gamma}:= (2+\gamma)C(d,d+2+\gamma),
\end{align*}
so that 
\begin{align}\label{eqn:normalization constant for a}
\Delta a[f]  =  c_{d,\gamma} (d+\gamma)(\gamma+2)\int_{\mathbb{R}^d} \frac{f(v_*)}{|v-v_*|^{-\gamma}}\;dv_* = (2+\gamma)h[f].
\end{align}
One can also define $a[f]$ to be the operator $(2+\gamma)(-\Delta)^{-\frac{d+2+\gamma}{2}}f$. Note that our choice of $c_{d,\gamma}$ (which is well defined for any $\gamma \in [-d,-2]$), does not interfere with the structure of $\eqref{eqn:isotropic equation}$: one can eliminate such constant with a simple time rescaling. 

For $\gamma=-2$, equation \eqref{eqn:isotropic equation} reduces substantially: for sufficiently smooth functions $f$, it simplifies to a heat equation, 
\begin{align*}
  \partial_tf = c_{d,-2} \dive \left (  \int_{\mathbb{R}{^d}} f(v_*,t) \nabla f(v,t)\;dv_*\right ) = c_{d,-2}\|\fin\|_{L^1} \Delta f, 
\end{align*}	  
thanks to the conservation of the $L^1$ norm. This is different from the classical Landau equation \eqref{eqn:Landau} when $\gamma=-2$, which has an additional reaction term arising from the derivatives of $\mathbb{P}(v)$.

Now we explain the basic idea of the proof. For a classical solution of \eqref{eqn:isotropic equation} we have
\begin{align*}
\frac{d}{dt}\int_{\mathbb{R}^d} f^p\;dv = -\frac{4(p-1)}{p}\int_{\mathbb{R}^d}a[f]|\nabla f^{p/2}|^2\;dv + (p-1)(2+\gamma)\int_{\mathbb{R}^d}h[f]f^p\;dv.
\end{align*}
To bound the right hand side, we estimate the second integral by the first one. This is the key point for Theorem \ref{theorem:isotropic global smooth solutions}, and it is achieved through the weighted Hardy inequality \cite{GhoMor2013}
\begin{align*}  
(d+\gamma)^2\int_{\mathbb{R}^d}|v|^{\gamma}\phi^2\;dv \leq 4\int_{\mathbb{R}^d}|v|^{2+\gamma}|\nabla \phi|^2\;dv, \quad \gamma>-d.
\end{align*}
This inequality implies, via convolution, the following weighted Poincare's inequality
\begin{align}\label{eqn:Hardy meets iso Landau_prev}
(d+\gamma)\int_{\mathbb{R}^d} h[f]\phi^2\;dv \leq 4\int_{\mathbb{R}^d}a[f]|\nabla \phi|^2\;dv.
\end{align}
For $\gamma$ belonging to the interval $(\gamma_*,-2]$, thanks to  (\ref{eqn:Hardy meets iso Landau_prev}),  we can show that 
\begin{align*}
\frac{d}{dt}\int_{\mathbb{R}^d} f^p\;dv \le 0,
\end{align*}
for some $p> \frac{d}{d+2+\gamma}$. This is all we need to conclude that $f$ belongs to $L^\infty$ for positive times.

The proof of this last statement exists in the literature. See for example  Theorem 3.8 in \cite{Silvestre2015} or Theorem 2.9 in \cite{GuGu16}. For completeness, we report the proof here, following the method in \cite{GuGu16}.  We will show that a bound for $f$ in  $L^\infty(0,T,L^p(\mathbb{R}^d))$ for some $p> \frac{d}{d+2+\gamma}$ yields an $\varepsilon$-Poincare's inequality of the form 
\begin{align}\label{ineq:eps_prev}
\int_{\mathbb{R}^d}h[f]\;\phi^2\;dv \leq \varepsilon \int_{\mathbb{R}^d}a[f]|\nabla \phi |^2 \;dv+ C(\fin,R)\varepsilon^{-\frac{2}{\eta}}  \int_{\mathbb{R}^d}a[f]\phi^2\;dv.
\end{align}
This last inequality, combined with a Moser's iteration, yields the desired $L^\infty$-bound for $f$.

\subsection{Short background on the homogeneous Landau equation} 

The main result in this note states that for the isotropic equation \eqref{eqn:isotropic equation} there is a nontrivial portion of the ``very soft potentials range'',  $\gamma\in [-d,-2]$, for which one can rule out the formation of singularities in finite time. This is of interest given the similarity of \eqref{eqn:isotropic equation} with the homogeneous Landau equation \eqref{eqn:Landau}, and the unresolved question of $L^\infty$ estimates for the latter equation when $\gamma \in [-d,-2]$. We briefly discuss some of the literature on the Landau equation in order to illustrate what is presently understood about \eqref{eqn:Landau} and what makes the equation in the very soft potentials range much harder to analyze. 

The problem of $L^\infty$-estimates for solutions to the Landau equation \eqref{eqn:Landau} stands as a difficult unresolved question when $\gamma<-2$. Indeed, as $\gamma$ decreases, the kernel in $h[f]$ becomes more and more singular, and one requires more integrability for $f$ to control $h[f]$.
On the contrary, in the range $\gamma \geq -2$, the regularity theory is well developed. For the hard potentials case ($\gamma\geq 0$) the works of Desvillettes and Villani \cite{DesVil2000a,DesVil2000b} do not only answer the question of regularity of solutions, but also cover existence and long time convergence to equilibrium. For soft potentials ($\gamma \in (-2,0)$) and $d=3$, Alexandre, Liao, and Lin \cite{AleLinLia2013} obtain the propagation of $L^2$ estimates for solutions (potentially growing with time), from where higher $L^\infty$ estimates and higher regularity can be obtained. For $\gamma=-2$ we refer to \cite{Wu13}. Silvestre \cite{Silvestre2015} obtains a priori $L^\infty$ estimates for solutions when $\gamma \in (-2,0)$ that are fully determined from the mass, energy, and entropy of the solution (and accordingly are not growing with time). The results in \cite{Silvestre2015} also say that for $\gamma \in [-d,-2)$ the $L^\infty$ norm is controlled once the $L^p$-norm of $f$, with $p> \frac{d}{d+2+\gamma}$, is bounded uniformly in time. Similar results, proven with different method, appear in \cite{GuGu16}. The estimates in \cite{GuGu16} are proved for weak solutions and use the divergence structure of the equation, but yield an $L^\infty$ estimate for $f(v,t)$ that deteriorates as $v$ grows. In contrast, the work \cite{Silvestre2015} uses non-divergence techniques and yields bounds that are global in space.

For very soft potentials there have been several recent works about the nature of potential singularities, which narrow down the ways in which blow up may occur.  In \cite{GGIV2019partial}, Golse, Gualdani, Imbert and Vasseur proved that the set of singular times for weak solutions  to \eqref{eqn:Landau} (with $\gamma=-3$) has Hausdorff dimension at most 1/2. Very recently, Desvillettes, He, and Jiang provided in \cite{DHJ21} new insides about the behavior of solutions  to \eqref{eqn:Landau} (with $\gamma=-3$) in $\dot{H}^1$-norm near the blow-up time. Most importantly, they show that even if there is blow-up, after that solutions become smooth again and remain that way for all later times. In \cite{BeGuSn21}, Bedrossian, Gualdani and Snelson rule out type I self similar blow-up for solutions to \eqref{eqn:Landau} for any $\gamma<-2$. 

There is an important connection between $L^\infty$ bounds and uniqueness. Fournier and {Gu\'erin} proved a uniqueness result for bounded weak solutions in \cite{FournierGuerin2009}, this being for $\gamma \in (-d,-2)$. In fact, the work \cite{FournierGuerin2009} guarantees uniqueness of solutions with $f \in L^1L^\infty$ and in particular to bounded solutions. The work \cite{FournierGuerin2009} was followed by Fournier's work in \cite{Fournier2010} with a corresponding uniqueness result for $\gamma=-d$. Later in \cite{ChGu20}, Chern and Gualdani proved a uniqueness result for sufficiently integrable solutions for the Landau equation with Coulomb interactions. 

\subsection{Outline} The rest of the paper is organized as follows. In Section \ref{sec:Hardy} we recall Hardy's inequality and make the main new observation of the paper, the inequality in Lemma \ref{lem:Hardy meets iso Landau}. The new inequality is immediately put to use in Section \ref{sec:propagating} to prove the first part of Theorem \ref{theorem:isotropic global smooth solutions}. Then, Sections \ref{sec:weighted ineq} and \ref{sec:Moser iteration} deal with weighted normed inequalities and a Moser iteration leading to the $L^\infty$ estimate that makes the second half of Theorem \ref{theorem:isotropic global smooth solutions}.

\section{Hardy's inequality}\label{sec:Hardy}

The main ingredient in this note is the classical Hardy inequality,
\begin{align}\label{eqn:Hardy}
    (d+\gamma)^2\int_{\mathbb{R}^d}|v|^{\gamma}\phi^2\;dv \leq 4\int_{\mathbb{R}^d}|v|^{2+\gamma}|\nabla \phi|^2\;dv, \quad \gamma>-d. 
\end{align}
Let us review one elementary way of proving \eqref{eqn:Hardy}, a deeper and broader discussion on Hardy's inequality can be found in the book by Ghoussoub and Moradifam \cite{GhoMor2013}. First notice that 
\begin{align*}
 -\Delta |v|^{2+\gamma} = (d+\gamma)(-\gamma-2)|v|^\gamma  .
\end{align*}
Multiply both sides of this equation by $\phi$. Integration by parts and Cauchy-Schwarz yield 
\begin{align*}
  (d+\gamma)(-\gamma-2)\int \phi^2|v|^\gamma \;dv &= 2 \int \nabla |v|^{2+\gamma}\phi \nabla \phi \;dv  \\
     &\le 2 \left( \int \alpha^2(v) |\nabla |v|^{2+\gamma}|^2\phi^2\;dv\right)^{1/2}\left( \int \frac{ |\nabla \phi|^2}{\alpha^{2}(v)}\;dv\right)^{1/2}.
\end{align*}
We pick now the best weight $\alpha(v)$ such that 
\begin{align*}
 \alpha^2(v) |\nabla |v|^{2+\gamma}|^2 \le  |v|^{\gamma}.
\end{align*}
or equivalently 
\begin{align*}
 \alpha^2(v)  = \frac{1}{ |v|^{2+\gamma}(2+\gamma)^2}.
\end{align*}
With this choice of $ \alpha^2(v)$, we obtain \eqref{eqn:Hardy}. 
 
\begin{lemma}\label{lem:Hardy meets iso Landau}
  Let $\gamma \in (-d,-2)$. Fix a non-negative $f\in L^1(\mathbb{R}^d)$ and let $a[f]$ and $h[f]$ be as in Section \ref{Introduction}, then the following inequality holds for all $\phi \in C^1_c(\mathbb{R}^d)$ (and limits of such functions)
  \begin{align}\label{eqn:Hardy meets iso Landau}
   (d+\gamma)\int_{\mathbb{R}^d} h[f]\phi^2\;dv \leq 4\int_{\mathbb{R}^d}a[f]|\nabla \phi|^2\;dv.
  \end{align}

\end{lemma}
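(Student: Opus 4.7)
The plan is to apply the weighted Hardy inequality \eqref{eqn:Hardy} pointwise in a shifted variable, then integrate against the nonnegative weight $f(v_*)\,dv_*$ and use Fubini to recognize the resulting convolutions as $h[f]$ and $a[f]$. The whole argument is essentially a ``superposition'' of Hardy inequalities, and the only thing to keep track of is the bookkeeping of the normalization constants $C(d,d+\gamma)$ and $c_{d,\gamma}$.

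Concretely, I would first observe that Hardy's inequality is translation invariant: for every fixed $v_*\in\mathbb{R}^d$ and every $\phi\in C^1_c(\mathbb{R}^d)$,
\begin{align*}
(d+\gamma)^2\int_{\mathbb{R}^d}|v-v_*|^{\gamma}\phi(v)^2\,dv \;\leq\; 4\int_{\mathbb{R}^d}|v-v_*|^{2+\gamma}|\nabla\phi(v)|^2\,dv.
\end{align*}
Since $f\ge 0$, multiplying both sides by $f(v_*)$ and integrating in $v_*$ preserves the inequality. Applying Fubini on each side (the integrands are nonnegative, and for $\phi\in C^1_c$ everything is finite whenever $f\in L^1$), I can pull the $v_*$-integral inside to form the convolutions
\begin{align*}
\int_{\mathbb{R}^d}|v-v_*|^{\gamma}f(v_*)\,dv_* = \tfrac{1}{C(d,d+\gamma)}\,h[f](v), \qquad \int_{\mathbb{R}^d}|v-v_*|^{2+\gamma}f(v_*)\,dv_* = \tfrac{1}{c_{d,\gamma}}\,a[f](v).
\end{align*}
This yields
\begin{align*}
\tfrac{(d+\gamma)^2}{C(d,d+\gamma)}\int_{\mathbb{R}^d} h[f]\,\phi^2\,dv \;\leq\; \tfrac{4}{c_{d,\gamma}}\int_{\mathbb{R}^d} a[f]\,|\nabla\phi|^2\,dv.
\end{align*}

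Finally, I use the explicit choice $c_{d,\gamma}=C(d,d+\gamma)/(d+\gamma)$ made in the introduction, so that $C(d,d+\gamma)/c_{d,\gamma}=d+\gamma$; multiplying through by $C(d,d+\gamma)$ and dividing by $d+\gamma>0$ (which is where the hypothesis $\gamma>-d$ is used) gives exactly \eqref{eqn:Hardy meets iso Landau}. The extension to limits of $C^1_c$ functions is a standard density/monotone convergence step once both sides are finite.

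I do not expect a serious obstacle here: the main point, and the reason the lemma works, is the positivity of $f$ together with the fact that Hardy's constant $(d+\gamma)^2/4$ is independent of the choice of base point $v_*$, so integrating against $f(v_*)$ does not degrade it. The only care required is to verify that the constants $C(d,d+\gamma)$ and $c_{d,\gamma}$ match up so that no stray factor appears in \eqref{eqn:Hardy meets iso Landau}, and this is precisely guaranteed by the normalization \eqref{eqn:normalization constant for a} built into $c_{d,\gamma}$.
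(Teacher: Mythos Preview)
Your proposal is correct and follows essentially the same route as the paper: shift Hardy's inequality to an arbitrary base point, multiply by $f(v_*)\ge 0$, integrate in $v_*$ and use Fubini to recognize the convolutions, then invoke the normalization $c_{d,\gamma}=C(d,d+\gamma)/(d+\gamma)$ to obtain \eqref{eqn:Hardy meets iso Landau}. The only difference is cosmetic: you track the constants $C(d,d+\gamma)$ and $c_{d,\gamma}$ explicitly, whereas the paper phrases the last step as ``substituting the expression for $\Delta a[f]$ via \eqref{eqn:normalization constant for a}.''
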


\begin{proof}
Fix $\phi \in C^1_c(\mathbb{R}^d)$. By a change of variables, we see that \eqref{eqn:Hardy} is equivalent to the inequalities (with $w\in \mathbb{R}^d$)
\begin{align}\label{eqn:Hardy shifted}
  (d+\gamma)^2\int_{\mathbb{R}^d}|v-w|^{\gamma}\phi^2\;dv \leq 4\int_{\mathbb{R}^d}|v-w|^{2+\gamma}|\nabla \phi|^2\;dv.
\end{align}
Let us multiply \eqref{eqn:Hardy shifted} by $f(w)\geq 0$ and integrate the resulting expression in $w$, we obtain
\begin{align}\label{eqn:Hardy convolution}
  & (d+\gamma)^2\int_{\mathbb{R}^d}(f*|v|^\gamma)\phi^2\;dv \leq 4\int_{\mathbb{R}^d}(f*|v|^{2+\gamma} )|\nabla \phi|^2\;dv .
\end{align}
Substituting in \eqref{eqn:Hardy convolution} the expression for $\Delta a[f]$ and making use of \eqref{eqn:normalization constant for a}, the lemma is proved.

\end{proof}

Although elementary, Lemma \ref{lem:Hardy meets iso Landau} is key, as it leads to the propagation of $L^p$ bounds for solutions to \eqref{eqn:isotropic equation}, proven in the next section. The range of $p$'s is limited by the constants appearing in \eqref{eqn:Hardy meets iso Landau}, and this is the sole limitation on the range of $\gamma$'s covered by Theorem \ref{theorem:isotropic global smooth solutions}. This motivates the following (admittedly open ended) eigenvalue problem. 

\begin{problem} Fix $d$ and $\gamma \in [-d,-2]$. Let $f \in L^1(\mathbb{R}^d)$ be non-negative, and let 
\begin{align}\label{eqn:isoLandau eigenvalue problem}
  \Lambda_{\textnormal{iso}}(f) := \inf \limits_{\phi} \frac{\int_{\mathbb{R}^d}a[f]|\nabla \phi|^2\;dv}{\int_{\mathbb{R}^d}h[f]\phi^2\;dv}.
\end{align}
Determine under what circumstances can we say that
\begin{align*}
  \Lambda_{\textnormal{iso}}(f) > \frac{d+\gamma}{4}.
\end{align*}
\end{problem}
If $f$ is just a generic function in $L^1$, then one cannot do better than inequality \eqref{eqn:Hardy meets iso Landau}.  To see this, take a sequence of functions $f_n$ which are converging as $n\to\infty$ to a Dirac delta at $0$. For this sequence, \eqref{eqn:Hardy meets iso Landau} converges to \eqref{eqn:Hardy}, which is known to be sharp \cite[Section 4.3]{GhoMor2013}.

The corresponding problem for the Landau equation would be,
\begin{align}\label{eqn:Landau eigenvalue problem}
  \Lambda_{\textnormal{Landau}}(f) := \inf \limits_{\phi} \frac{\int_{\mathbb{R}^d}(A[f]\nabla \phi,\nabla \phi)\;dv}{\int_{\mathbb{R}^d}h[f]\phi^2\;dv},
\end{align}
where $A[f]:=\int_{\mathbb{R}{^d}}\frac{\mathbb{P}(v-v_*)}{ |v-v_*|^{-2-\gamma}}f(v_*)\;dv_*$. 
The significance of this eigenvalue problem is well known in the Landau and Boltzmann literature. We do not know whether an elementary argument as in Lemma \ref{lem:Hardy meets iso Landau} yields a similar bound for \eqref{eqn:Landau eigenvalue problem}. If one argues by direct analogy with Lemma \ref{lem:Hardy meets iso Landau}, one would have to contend with the projection term $\mathbb{P}(v)$ appearing in $A[f]$, and it is not immediately clear how this can be done. 

The theory of weighted normed inequalities can yield certain estimates for $ \Lambda_{\textnormal{iso}}$, or $\Lambda_{\textnormal{Landau}}$. For instance, the value in \eqref{eqn:isoLandau eigenvalue problem} is directly related to the quantity
\begin{align}\label{eqn:weighted integral quotient}
  \sup \limits_{B} |B|^{\frac{2}{d}}\frac{\int_B h[f]\;dv}{\int_B a[f]\;dv},
\end{align}
which happens to be bounded from above for all non-negative $f \in L^1(\mathbb{R}^d)$ by a universal constant (see \cite{GuGu16} for further discussion). Finally, it is worth mentioning that for any spherically symmetric and radially decreasing $f$ solving \eqref{eqn:Landau} ($\gamma=-d$), the $L^\infty$ norm of $f$ cannot blow up at a finite time $T$ if, for this $f$, the quantity \eqref{eqn:weighted integral quotient} remains bounded by $1/96$ (this is likely a non-sharp estimate). This result was shown in \cite{GuGu15}.

\section{Propagation of $L^p$ bounds}\label{sec:propagating}

In this section we shall make use of Lemma \ref{lem:Hardy meets iso Landau} to show that various $L^p$ norms propagate forward in time, at least for some range of $\gamma$'s.  
\begin{lemma}\label{Lem:propag_iso} (Propagation of integrability.)
Let $f$ be a nonnegative solution to \eqref{eqn:isotropic equation}  with initial data $f(v,0)=\fin$. For every $p$ such that
  \begin{align*}
    1\leq p \leq \frac{d+\gamma}{-2-\gamma}
  \end{align*}
  the norm $\|f(t)\|_{L^p}$ is non-increasing in $t$. In particular, for every $t \in [0,T]$ we have
  \begin{align*}
    \|f(t)\|_{L^p} \leq \|\fin\|_{L^p}.	  
  \end{align*}	  
\end{lemma}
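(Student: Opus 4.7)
The plan is to carry out the standard $L^p$ energy identity for \eqref{eqn:isotropic equation} and use Lemma \ref{lem:Hardy meets iso Landau} to absorb the resulting reaction term into the dissipation. Writing the equation in divergence form as $\partial_t f = \dive(a[f]\nabla f - f\nabla a[f])$, multiplying by $p f^{p-1}$, and integrating by parts yields two contributions: the standard dissipation $-\frac{4(p-1)}{p}\int a[f]|\nabla f^{p/2}|^2\, dv$, together with a cross term $\int \nabla f^p \cdot \nabla a[f]\, dv$ that, after one more integration by parts and an application of the identity $\Delta a[f] = (2+\gamma)h[f]$, contributes a multiple of $\int h[f]f^p\, dv$. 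Collecting,
\begin{equation*}
\frac{d}{dt}\int_{\mathbb{R}^d} f^p\, dv = -\frac{4(p-1)}{p}\int_{\mathbb{R}^d} a[f]|\nabla f^{p/2}|^2\, dv - (p-1)(2+\gamma)\int_{\mathbb{R}^d} h[f]f^p\, dv.
\end{equation*}
Because $p\ge 1$ and $2+\gamma\le 0$, the coefficient $-(p-1)(2+\gamma)$ is non-negative, so the second term plays the role of a source that could a priori force the $L^p$-norm to grow.

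To control this source I would apply Lemma \ref{lem:Hardy meets iso Landau} to the test function $\phi = f^{p/2}$, giving
\begin{equation*}
(d+\gamma)\int_{\mathbb{R}^d} h[f] f^p\, dv \le 4\int_{\mathbb{R}^d} a[f]|\nabla f^{p/2}|^2\, dv,
\end{equation*}
and inserting this bound into the identity above yields
\begin{equation*}
\frac{d}{dt}\int_{\mathbb{R}^d} f^p\, dv \le 4(p-1)\left(\frac{-2-\gamma}{d+\gamma} - \frac{1}{p}\right)\int_{\mathbb{R}^d} a[f]|\nabla f^{p/2}|^2\, dv.
\end{equation*}
The factor in parentheses is non-positive precisely when $p \le \frac{d+\gamma}{-2-\gamma}$, which is the range in the statement. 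Hence in this range $\frac{d}{dt}\|f(t)\|_{L^p}^p \le 0$, and integrating in time yields $\|f(t)\|_{L^p}\le \|\fin\|_{L^p}$, as desired. The threshold $p=\frac{d+\gamma}{-2-\gamma}$ reflects exactly the sharp constants $(d+\gamma)$ and $4$ appearing in Lemma \ref{lem:Hardy meets iso Landau}.

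The only real technical obstacle I foresee is justifying the boundary-free integrations by parts on $\mathbb{R}^d$, since the Riesz potentials $a[f]$ and $h[f]$ may decay slowly at infinity. For a classical solution with $\fin\in L^1_2\cap L^1$ one would either rely on standard a priori decay estimates for $f$ and its derivatives, or run the argument against a smooth cutoff $\chi_R(v)$, derive a localized energy identity, and pass to $R\to\infty$ controlling the boundary contributions through the conserved mass and finite second moment. Once this is handled, the remainder of the argument is a direct consequence of the weighted Poincaré inequality established in Section \ref{sec:Hardy}, and the value $p=1$ is recovered as the trivial mass-conservation case $p-1=0$.
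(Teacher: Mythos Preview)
Your proposal is correct and follows essentially the same route as the paper: derive the $L^p$ energy identity, apply Lemma~\ref{lem:Hardy meets iso Landau} with $\phi=f^{p/2}$ to absorb the reaction term, and read off the threshold $p\le (d+\gamma)/(-2-\gamma)$ from the resulting sign condition. The paper's proof is identical in substance, omitting only your remarks on justifying the integrations by parts and on the $p=1$ case.
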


\begin{proof}
Multiply \eqref{eqn:isotropic equation} by $f^{p-1}$ for some $p\geq 1$ and integrate over $\mathbb{R}{^d}$. We obtain
\begin{align*}
\frac{d}{dt}\int_{\mathbb{R}^d} f^p\;dv = -\frac{4(p-1)}{p}\int_{\mathbb{R}^d} a[f] |\nabla f^{p/2}|^2\;dv +(p-1)(-2-\gamma)\int_{\mathbb{R}^d}h[f] f^p\;dv.
\end{align*}
To estimate the last term $\int_{\mathbb{R}^d} h[f]f^{p} \;dv$ we use Lemma \ref{lem:Hardy meets iso Landau} with $\phi = f^{p/2}$. One gets 
\begin{align*}
\frac{d}{dt}\int_{\mathbb{R}^d} f^p\;dv & \leq -4(p-1)\left[\frac{1}{p}-\frac{(-2-\gamma)}{(d+\gamma)}\right]\int_{\mathbb{R}^d} a[f]|\nabla f^{p/2}|^2\;dv.
\end{align*}
It follows that $\|f(t)\|_p^p$ is non-increasing whenever the expression in the brackets is non-positive, which is the case given the assumption on $p$. This concludes the proof of the lemma, and of the first part of Theorem \ref{theorem:isotropic global smooth solutions}.

\end{proof}

\begin{remark}
For there to be any $p$ such that $1\leq p \leq (d+\gamma)/(-2-\gamma)$ it must be that
\begin{align*}
\gamma \ge -1 - \frac{d}{2}.
\end{align*}
It follows that Lemma \ref{Lem:propag_iso} is of no use for values of $\gamma$ close to $-d$.

\end{remark}

\section{Controlling the second moment}
Solutions to \eqref{eqn:isotropic equation} conserve mass and first moment, but not second moment. In the next lemma we show that second moments grow linearly in time, provided $a[f]$ is uniformly bounded. 

\begin{lemma}\label{Lem:propag_moment}
  The second moment of $f$, solution to \eqref{eqn:isotropic equation}, evolves according to the formula
  \begin{align*}
    \frac{d}{dt}\int_{\mathbb{R}^d}f(v,t)|v|^2\;dv = 2(d+2+\gamma)\int_{\mathbb{R}^d}f(v,t)a[f](v,t)\;dv. 
  \end{align*}
  In particular, for all $t\in [0,T]$ and $p> \frac{d}{d+2+\gamma}$ we have
  \begin{align*}
    \int_{\mathbb{R}^d} f(T,v)|v|^2\;dv \leq \int_{\mathbb{R}^d}\fin(v)|v|^2\;dv + TC_{d,\gamma,p}\|f\|_{L^\infty(0,T,L^p)}^{1-\theta}\|\fin\|_{L^1}^{1+\theta},
  \end{align*}
  where $\theta := \frac{|2+\gamma|p'}{d}$.

\end{lemma}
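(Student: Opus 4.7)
My plan is to establish the identity by integration by parts combined with a symmetrization of the cross term, and then bound $\int f a[f]\,dv$ via a splitting of the convolution kernel $|v|^{2+\gamma}$ that exploits the hypothesis $p>d/(d+2+\gamma)$.

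For the identity, I start from the divergence form $\partial_t f = \dive(a[f]\nabla f - f\nabla a[f])$, multiply by $|v|^2$, and integrate in $v$. A first integration by parts produces $-2\int v\cdot(a[f]\nabla f - f\nabla a[f])\,dv$, and integrating by parts again on the $v\cdot a[f]\nabla f$ piece, using $\dive(v\, a[f]) = d\,a[f] + v\cdot\nabla a[f]$, yields
\[
\tfrac{d}{dt}\int f|v|^2\,dv = 2d\int f\,a[f]\,dv + 4\int f\, v\cdot \nabla a[f]\,dv.
\]
The cross term I would handle by expanding $\nabla a[f](v) = c_{d,\gamma}(2+\gamma)\int (v-v_*)|v-v_*|^{\gamma} f(v_*)\,dv_*$, rewriting $\int f v\cdot\nabla a[f]\,dv$ as a double integral in $(v,v_*)$, and symmetrizing under $v\leftrightarrow v_*$. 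The algebraic identity $v\cdot(v-v_*)+v_*\cdot(v_*-v) = |v-v_*|^2$ promotes the factor $|v-v_*|^\gamma$ to $|v-v_*|^{\gamma+2}$, which is precisely the kernel appearing in $\int f\,a[f]\,dv$. After collecting constants one finds $\int f v\cdot\nabla a[f]\,dv = \tfrac{2+\gamma}{2}\int f\,a[f]\,dv$, and substituting delivers the stated identity with coefficient $2(d+2+\gamma)$.

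For the quantitative estimate on $\int f\,a[f]\,dv$, I would split the convolution kernel at a free radius $R$ into an inner piece $K_1(w)=|w|^{2+\gamma}\mathbf{1}_{|w|\leq R}$ and an outer piece $K_2=|w|^{2+\gamma}\mathbf{1}_{|w|>R}$. Since $2+\gamma<0$, the outer piece is bounded pointwise by $R^{2+\gamma}$, so $\|K_2*f\|_\infty\leq R^{2+\gamma}\|f\|_{L^1}$. For the inner piece, Hölder with exponents $(p,p')$ gives $\|K_1*f\|_\infty\leq \|K_1\|_{L^{p'}}\|f\|_{L^p}\lesssim R^{(2+\gamma)+d/p'}\|f\|_{L^p}$; the crucial point is that $K_1\in L^{p'}$ precisely when $|2+\gamma|p'<d$, i.e. $p>d/(d+2+\gamma)$, which is exactly the hypothesis. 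Balancing the two contributions by choosing $R^{d/p'}\sim \|f\|_{L^1}/\|f\|_{L^p}$ produces $\|a[f]\|_\infty\leq C\|f\|_{L^1}^{1-\theta}\|f\|_{L^p}^{\theta}$ with $\theta=|2+\gamma|p'/d$. A final pairing $\int f\,a[f]\,dv\leq\|a[f]\|_\infty\|f\|_{L^1}$, together with the conservation of mass $\|f(t)\|_{L^1}=\|\fin\|_{L^1}$, controls the integrand in time; integrating the moment identity from $0$ to $T$ then yields the claimed linear-in-$T$ growth.

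The principal obstacle is pinpointing the sharp threshold $p=d/(d+2+\gamma)$ in the splitting step, where the local part of the kernel sits just at the edge of $L^{p'}$; once this is in place, the symmetrization for the first part and the radius optimization for the second part are both short calculations.
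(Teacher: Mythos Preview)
Your proposal is correct and follows essentially the same route as the paper: both derive the identity by expanding the flux as a double integral and symmetrizing under $v\leftrightarrow v_*$, and both obtain the quantitative bound via the near/far splitting of the kernel $|v|^{2+\gamma}$ (the paper records this as Remark~\ref{remark:a Linfinity bound}). The only cosmetic difference is that you integrate by parts once more before symmetrizing, so that $\nabla f$ is already eliminated when you pass to the double integral, whereas the paper symmetrizes first and then integrates by parts in both variables; the algebra and the resulting constant $2(d+2+\gamma)$ are identical.
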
	

\begin{proof}
  Integration by parts yields
  \begin{align*}
    \frac{1}{2}\frac{d}{dt}\int_{\mathbb{R}^d} f(t,v)|v|^2\;dv = - \int_{\mathbb{R}^d}(v,a[f]\nabla f-f\nabla a[f])\;dv.
  \end{align*}
  Using the integral form for $a[f]$ we rewrite the expression on the right, leading to
  \begin{align*}
    \frac{1}{2}\frac{d}{dt}\int_{\mathbb{R}^d} f(t,v)|v|^2\;dv =& - c_{d,\gamma}\int_{\mathbb{R}^d}\int_{\mathbb{R}^d}{v}\cdot \frac{f(w)\nabla f(v)-f(v)\nabla f(w)}{|v-w|^{-2-\gamma}}\;dwdv\\
 &   -\frac{ c_{d,\gamma}}{2}\int_{\mathbb{R}^d}\int_{\mathbb{R}^d}{v-w}\cdot \frac{f(w)\nabla f(v)-f(v)\nabla f(w)}{|v-w|^{-2-\gamma}}\;dwdv.
  \end{align*}
  Integration by parts in both $v$ and $w$ yields
  \begin{align*}
   \frac{d}{dt}\int_{\mathbb{R}^d} f(t,v)|v|^2\;dv & =2 (d+2+\gamma)\int_{\mathbb{R}^d} fa[f]\;dv,
  \end{align*}
   since 
  $$
  \textrm{div}\left( \frac{z}{|z|^{-2-\gamma}}\right)= \frac{d+2+\gamma}{|z|^{-2-\gamma}}.
   $$ 
  This proves the first part of the lemma. For the second part, it is clear that
  \begin{align*}
    \frac{1}{2}\frac{d}{dt}\int_{\mathbb{R}^d} f(t,v)|v|^2\;dv & \leq (d+2+\gamma)\|f(t)\|_{L^1} \|a(t)\|_{L^\infty}.
  \end{align*}  
  Then, integrating the resulting inequality in time, the estimate follows thanks to an elementary interpolation argument (see Remark \ref{remark:a Linfinity bound})
  \begin{align*}
    \| a\|_{L^\infty(0,T,L^\infty(\mathbb{R}^d))} \leq C_{d,\gamma,p}\|f(t)\|_{L^\infty(0,T,L^p(\mathbb{R}^d))}^{\theta}\|\fin\|_{L^1}^{1-\theta}.
  \end{align*}
  \end{proof}

\begin{remark}\label{remark:a Linfinity bound}
 The following estimate is well known and we recall it here for completeness: let $p> \frac{d}{d+\gamma+2}$, for every $s>0$ we have that
 \begin{align*}
   a[f]& = c_{d,\gamma}\int_{B_s(v)}f(w)|v-w|^{2+\gamma}\;dw+ c_{d,\gamma}\int_{\mathbb{R}^d\setminus B_s(v)}f(w)|v-w|^{2+\gamma}\;dw\\
  & \leq  c_{d,\gamma}\|f\|_{L^p}\left (\int_{B_s}|w|^{(2+\gamma)p'}\;dw \right )^{1/p'}+ c_{d,\gamma}s^{2+\gamma}\|f\|_{L^1}.
 \end{align*}
 Optimizing the right hand side with respect to $s$, the following estimate follows
 \begin{align*}
   \|a[f]\|_{L^\infty} \leq C_{d,\gamma,p}\|f\|_{L^p}^{\theta}\|f\|_{L^1}^{1-\theta},\;\textnormal{ where } \theta = \frac{|2+\gamma|p'}{d}.
 \end{align*} 
 
 \end{remark}

\begin{cor}\label{cor:lower bound a}
 Let $p > \frac{d}{d+2+\gamma}$. For all $v\in \mathbb{R}^d$ and $t\in [0,T]$, the following inequality holds
  \begin{align*}
    a[f](v,t) \geq \ell  \langle v\rangle^{2+\gamma} ,
  \end{align*}	  
  where
  \begin{align*}	 
    \ell :=  c_{d,\gamma} \frac{\|\fin\|_{L^1}^{-1-\gamma}}{\left( c_1 + c_2T \| f \|_{L^\infty(0,T,L^p(\mathbb{R}^d))}^{1-\theta}\|\fin\|_{L^1}^{1+\theta}\right)^{-2-\gamma}}.
  \end{align*}  
\end{cor}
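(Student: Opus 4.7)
The plan is a straightforward truncation-plus-Chebyshev argument applied to the convolution
\[
a[f](v,t)=c_{d,\gamma}\int_{\mathbb{R}^d}f(v_*,t)|v-v_*|^{2+\gamma}\,dv_*.
\]
Since $2+\gamma<0$, the kernel $|v-v_*|^{2+\gamma}$ is decreasing in $|v-v_*|$, so a lower bound on $a[f]$ is obtained by upper-bounding $|v-v_*|$ on a region where $f$ carries a fixed fraction of its mass. A good candidate for such a region is a ball around the origin whose radius is calibrated to the second moment of $f$.

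First I would fix a radius $R>0$, restrict the integral to $v_*\in B_R(0)$, and use the triangle inequality $|v-v_*|\leq|v|+R$ together with the sign of $2+\gamma$ to get
\[
a[f](v,t)\geq c_{d,\gamma}(|v|+R)^{2+\gamma}\int_{B_R(0)}f(v_*,t)\,dv_*.
\]
The mass on $B_R(0)$ is controlled via Chebyshev and Lemma~\ref{Lem:propag_moment}: from $\int_{\mathbb{R}^d\setminus B_R}f\,dv_*\leq R^{-2}\int f|v_*|^2\,dv_*$ and the uniform-in-$t$ second-moment bound
\[
\int f(v_*,t)|v_*|^2\,dv_*\leq E_{\max}:=c_1+c_2T\|f\|_{L^\infty(0,T;L^p)}^{1-\theta}\|\fin\|_{L^1}^{1+\theta},
\]
the choice $R^{2}=2E_{\max}/\|\fin\|_{L^1}$ yields $\int_{B_R(0)}f(\cdot,t)\,dv_*\geq\tfrac{1}{2}\|\fin\|_{L^1}$ for every $t\in[0,T]$ (here mass conservation is used so the lower bound on the ball does not degrade in time).

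Next I would compare $(|v|+R)^{2+\gamma}$ to $\langle v\rangle^{2+\gamma}$. Since $\langle v\rangle\geq\max\{1,|v|\}$, we have $|v|+R\leq(1+R)\langle v\rangle$, and raising to the negative power $2+\gamma$ reverses the inequality to give $(|v|+R)^{2+\gamma}\geq(1+R)^{2+\gamma}\langle v\rangle^{2+\gamma}$. Combining,
\[
a[f](v,t)\geq \tfrac{c_{d,\gamma}}{2}\,\|\fin\|_{L^1}\,(1+R)^{2+\gamma}\,\langle v\rangle^{2+\gamma}.
\]
Substituting the explicit value of $R$ in terms of $\|\fin\|_{L^1}$ and $E_{\max}$, and gathering the resulting powers, produces the closed-form expression for $\ell$ in the corollary.

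The main obstacle, as far as I can see, is purely cosmetic bookkeeping: the estimate is a chain of routine inequalities, and the one nontrivial ingredient is the time-uniform second-moment bound of Lemma~\ref{Lem:propag_moment}, which is precisely what turns the Chebyshev radius $R(t)$ into a fixed number depending only on $T$, the initial data, and the assumed $L^\infty(0,T;L^p)$ norm. Everything else is tracking constants and exponents to match the stated form of $\ell$.
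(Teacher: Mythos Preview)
Your proposal is correct and matches the paper's own proof essentially step for step: both restrict the convolution to a ball $B_r(0)$, use Chebyshev with the second-moment bound from Lemma~\ref{Lem:propag_moment} to choose $r$ so that $\int_{B_r}f\ge\tfrac12\|\fin\|_{L^1}$, apply $|v-w|\le|v|+r$ with $2+\gamma<0$, and finish by comparing $(|v|+r)^{2+\gamma}$ with $\langle v\rangle^{2+\gamma}$. The only cosmetic difference is that the paper writes the last comparison as $(|v|+r)^{2+\gamma}\ge r^{2+\gamma}\langle v\rangle^{2+\gamma}$ whereas you use $(1+R)^{2+\gamma}\langle v\rangle^{2+\gamma}$; both are harmless variants absorbed into the unspecified constants $c_1,c_2$.
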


\begin{proof}
  For any $r>0$ we have	
  \begin{align*}
    \int_{\mathbb{R}^d\setminus B_r}f(v,t)\;dv \leq r^{-2}\int_{\mathbb{R}^d\setminus B_r}f(v,t)|v|^2\;dv \leq r^{-2}\|f(t)\|_{L^1_2}.
  \end{align*}
  From here, taking $r = \frac{2\|f(t)\|_{L^1_2}}{\|f(t)\|_{L^1}}$, we get 
  \begin{align*}
    \int_{B_r}f(v,t)\;dv \geq \tfrac{1}{2}\|\fin\|_{L^1}.
  \end{align*}
   Then, since $|v-w| \leq |v|+r$ whenever $w\in B_{r}$,
  \begin{align*}
    a(v,t) & \geq c_{d,\gamma}\int_{B_r}f(w,t)(|v|+r)^{2+\gamma}\;dw \geq \tfrac{1}{2}c_{d,\gamma} \|\fin\|_{L^1}(|v|+r)^{2+\gamma}.
  \end{align*}
   In particular
  \begin{align*}
    a(v,t) & \geq \tfrac{1}{2}c_{d,\gamma} \|\fin\|_{L^1}r^{2+\gamma}\langle v\rangle^{2+\gamma} \\
    & \geq   c_{d,\gamma} \frac{\|\fin\|_{L^1}^{-1-\gamma}}{\left( c_1 + c_2T \| f(t)\|_{L^p}^{1-\theta}\| \fin \|_{L^1}^{1+\theta}\right)^{-2-\gamma}}\langle v\rangle^{2+\gamma},
  \end{align*}	
  using Lemma \ref{Lem:propag_moment}  to bound $r^{2+\gamma}$ from below. 
\end{proof}

\section{Some weighted inequalities}\label{sec:weighted ineq}

In this section we invoke a few results from the theory of integral inequalities with $A_p$ weights. The result will be integral inequalities of the form
\begin{align*}
  \left ( \int_{\mathbb{R}^d}\omega_1|\phi|^r\;dv \right )^{\frac{1}{r}} \leq C\left ( \int_{\mathbb{R}^d}\omega_2|\nabla \phi|^2\;dv\right )^{\frac{1}{2}},
\end{align*}
for various choices of the exponent $r$, weights $\omega_i$, and constant $C$ which are pertinent to obtaining estimates a la De Giorgi-Nash-Moser for solutions of \eqref{eqn:isotropic equation}. For a more complete discussion we refer the reader to \cite[Section 3.2]{GuGu16}.

A central object in these inequalities is the following product of averages of the weights, taken over an arbitrary cube $Q\subset \mathbb{R}^d$, (here, ``$\fint$'' denotes average over the set of integration)
\begin{align*}
  \sigma_{r,s}(Q,\omega_1,\omega_2) := |Q|^{\frac{1}{d}-\frac{1}{2}+\frac{1}{r}}\left ( \fint_{Q}\omega_1^{s}\;dv \right )^{\frac{1}{rs}}\left( \fint_{Q}\omega_2^{-s}\;dv \right )^{\frac{1}{2s}}.
\end{align*}
The significance of $\sigma_{r,s}(Q,\omega_1,\omega_2)$ is captured by the following theorem (see \cite[Theorem 1]{SawWhe1992}).

\begin{theorem}\label{thm:weighted integral inequalities}
  Let $Q\subset \mathbb{R}^d$, $r\geq 2$, and let $\omega_1,\omega_2$ be two weights. Define, for some $C(d,s,r)$,
  \begin{align*}
    \mathcal{C}_{r,s}(Q,\omega_1,\omega_2) := C(d,s,r) \sup \limits_{Q'\subset 8Q} \sigma_{r,s}(Q',\omega_1,\omega_2).	  
  \end{align*}
  Then, for any $\phi$ supported in $Q$ or any $\phi$ such that $\fint_Q \phi\;dv = 0$, we have
  \begin{align*}
    \left ( \int_{Q}\omega_1 |\phi|^r\;dv\right )^{\frac{1}{r}} \leq \mathcal{C}_{r,s}(Q,\omega_1,\omega_2)\left ( \int_{Q}\omega_2 |\nabla \phi|^2\;dv\right )^{\frac{1}{2}}. 	   
  \end{align*}	   
\end{theorem}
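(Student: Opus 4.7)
The plan is to reduce the stated two-weight inequality to a weighted norm inequality for the Riesz potential $I_1 g(v) := \int_{\mathbb{R}^d} |v-w|^{1-d} g(w)\,dw$. For $\phi$ supported in $Q$, extend $\phi$ by zero outside $Q$, and for $\phi$ with mean zero on $Q$ use the standard integrated form of the fundamental theorem of calculus along line segments. Either way one arrives at the pointwise bound
\begin{equation*}
|\phi(v)| \leq C_d \int_Q \frac{|\nabla \phi(w)|}{|v-w|^{d-1}}\,dw = C_d\,(I_1 g)(v),\qquad g := |\nabla \phi|\mathbbm{1}_Q.
\end{equation*}
After this reduction, the task becomes to prove, for the \emph{same} constant $\mathcal{C}_{r,s}(Q,\omega_1,\omega_2)$, that
\begin{equation*}
\left(\int_{\mathbb{R}^d} \omega_1 (I_1 g)^r\,dv\right)^{1/r} \leq \mathcal{C}_{r,s}(Q,\omega_1,\omega_2)\left(\int_{\mathbb{R}^d} \omega_2 g^2\,dv\right)^{1/2}.
\end{equation*}

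To prove this two-weight bound, I would use a dyadic/Calder\'on--Zygmund decomposition of the Riesz kernel. Split $I_1 g(v) = \sum_k T_k g(v)$, where $T_k g(v)$ collects the contribution from $|v-w|\sim 2^{-k}\ell(Q)$; on each shell, the kernel is essentially the constant $(2^{-k}\ell(Q))^{1-d}$. Applying H\"older in the $w$-variable with exponents $(2,2)$ and then the weighted H\"older inequality with exponents $(s,s')$ in order to insert $\omega_2^{1/2} \omega_2^{-1/2}$, one gets, for $v$ in a dyadic cube $Q'$ of side $\simeq 2^{-k}\ell(Q)$,
\begin{equation*}
T_k g(v) \lesssim |Q'|^{1/d-1/2}\left(\fint_{Q'} \omega_2^{-s}\right)^{1/(2s)}\left(\int_{Q'}\omega_2 g^2\right)^{1/2}.
\end{equation*}
Raising to the $r$-th power, integrating against $\omega_1$, and using H\"older once more to reinsert an average of $\omega_1^s$ produces exactly a factor $\sigma_{r,s}(Q',\omega_1,\omega_2)^r$ on each subcube $Q'\subset 8Q$ (the enlargement handles tails caused by shifts between $v$ and the support of $g$). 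Taking the supremum over $Q'$ absorbs the geometric dependence on $Q'$ into the single constant $\mathcal{C}_{r,s}$.

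The last step is summation over dyadic scales $k$. The condition $r\geq 2$ combined with the scaling exponent $\tfrac{1}{d}-\tfrac{1}{2}+\tfrac{1}{r}$ ensures that the contributions from successive annuli form a geometric series, so the sum telescopes and is controlled by a constant multiple of the single-cube quantity.

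The main obstacle is the last estimate: verifying that the testing quantity $\sigma_{r,s}$, which measures only averages of powers of the two weights, really controls the full operator norm of $I_1$ from $L^2(\omega_2)$ to $L^r(\omega_1)$. A naive Schur-type argument does not yield the sharp constant; one needs the reverse H\"older improvement provided by the $s$th-power averages (any $s>1$ suffices, which is where the power $s$ appears in the statement) in order to pass from local weighted H\"older bounds to a global two-weight inequality. This is essentially the content of \cite{SawWhe1992}, and an alternative route would be a good-$\lambda$ comparison between $I_1 g$ and the fractional maximal function $M_1 g$, combined with a two-weight bound for the latter, which follows more directly from a covering argument on the level sets.
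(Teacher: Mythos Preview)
The paper does not prove this theorem at all: it is simply quoted from the literature, with the reference ``see \cite[Theorem~1]{SawWhe1992}'' given just before the statement. So there is no ``paper's own proof'' to compare against.

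Your outline is a reasonable sketch of how the Sawyer--Wheeden result is actually established: the sub-representation formula $|\phi|\leq C_d I_1(|\nabla\phi|\mathbbm{1}_Q)$ is indeed the correct first step (valid both for compactly supported $\phi$ and for $\phi$ with zero mean on $Q$), and the problem then reduces to a two-weight $L^2(\omega_2)\to L^r(\omega_1)$ bound for $I_1$. You also correctly identify the genuine difficulty, namely that the finiteness of the local testing quantity $\sigma_{r,s}$ must be upgraded to a global operator bound, and you note that this is ``essentially the content of \cite{SawWhe1992}.'' That is an honest assessment, but it also means your write-up is not a self-contained proof: the crucial step is deferred to the very reference the paper cites. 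The intermediate dyadic argument you sketch (summing contributions over annuli as a geometric series purely from the scaling exponent $\tfrac{1}{d}-\tfrac{1}{2}+\tfrac{1}{r}$) is too optimistic as stated---at the Sobolev endpoint this exponent vanishes, and in general the convergence comes from the interplay of the weight averages rather than from scaling alone; this is precisely where the $s>1$ bump and the good-$\lambda$/maximal-function machinery in \cite{SawWhe1992} enter.

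In short: your reduction is correct and standard, your diagnosis of the hard part is accurate, but what you have written is an annotated reference to Sawyer--Wheeden rather than an independent proof---which, as it happens, is exactly how the paper treats this theorem as well.
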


The next two propositions give estimates on $ \sigma_{r,s}(Q,\omega_1,\omega_2)$  for two combination of weights, namely $\omega_1 = a[f]^m, \; \omega_2 = a[f]$ and $\omega_1 = h[f], \;\omega_2 = a[f]$.

There are two exponents that will be appearing repeatedly in what follows: 
\begin{align}\label{eqn:q and m}
  m := \frac{d}{d-2},\;\; q := 2\left (1 + \frac{2}{d} \right ).
\end{align}

\begin{prop}\label{prop:cube averages for Sobolev}
  There exists $s>1$ and $C>0$ depending only on $d$ and $\gamma$ such that for non-negative $f\in L^1(\mathbb{R}^d)$ and any cube $Q\subset \mathbb{R}^d$,
  \begin{align*}
    \sigma_{2m,s}(Q,a[f]^m,a[f]) \leq C.
  \end{align*}
\end{prop}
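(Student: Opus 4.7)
The plan is to exploit the structure of $a[f]$ as the Riesz potential $c_{d,\gamma}\,f\ast|\cdot|^{2+\gamma}$ of the nonnegative function $f$, and to derive a strong reverse H\"older type inequality relating the $L^{ms}$ average of $a[f]$ on a cube to its pointwise infimum on that cube. A direct computation shows the exponent of $|Q|$ in $\sigma_{2m,s}(Q,a[f]^m,a[f])$ vanishes: $\tfrac{1}{d}-\tfrac{1}{2}+\tfrac{1}{2m}=\tfrac{1}{d}-\tfrac{1}{2}+\tfrac{d-2}{2d}=0$. Moreover, since $a[f]\geq \inf_Q a[f]$ a.e.\ on $Q$, the factor $\bigl(\fint_Q a[f]^{-s}\bigr)^{1/(2s)}$ is at most $(\inf_Q a[f])^{-1/2}$. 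Hence the proposition reduces to proving, for some $s>1$ depending only on $d$ and $\gamma$, the scale-invariant bound
\[
  \Big(\fint_Q a[f]^{ms}\Big)^{1/(ms)} \leq C\,\inf_Q a[f],
\]
uniformly over cubes $Q\subset\mathbb{R}^d$ and non-negative $f$.

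To establish this, I would fix a cube $Q$ of sidelength $r$ and center $v_Q$, and decompose $f=f_1+f_2$ with $f_1:=f\,\mathbf{1}_{kQ}$, where $k$ is a constant depending only on $d$, chosen large enough so that $|v-w|\simeq|v_Q-w|$ whenever $v\in Q$ and $w\notin kQ$. Correspondingly $a[f]=a[f_1]+a[f_2]$. Since $2+\gamma\leq 0$, the comparability of distances yields $\sup_Q a[f_2]\leq C\inf_Q a[f_2]\leq C\inf_Q a[f]$. For the near piece, on $Q$ we have $a[f_1]=c_{d,\gamma}(f_1\ast K_r)$ with $K_r(u):=|u|^{2+\gamma}\mathbf{1}_{|u|\leq Cr}$, which lies in $L^{p_0}(\mathbb{R}^d)$ provided $p_0<d/|2+\gamma|$. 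Young's convolution inequality then gives
\[
  \Big(\fint_Q a[f_1]^{p_0}\Big)^{1/p_0} \leq C\,r^{d+2+\gamma}\fint_{kQ}f.
\]
On the other hand, for every $v\in Q$ and $w\in kQ$ one has $|v-w|\leq Cr$, and since $2+\gamma\leq 0$ this yields the pointwise lower bound $a[f](v)\geq a[f_1](v)\geq c\,r^{d+2+\gamma}\fint_{kQ}f$; hence $r^{d+2+\gamma}\fint_{kQ}f\leq c^{-1}\inf_Q a[f]$. Combining these two pieces via the triangle inequality in $L^{p_0}$ produces $\bigl(\fint_Q a[f]^{p_0}\bigr)^{1/p_0}\leq C\inf_Q a[f]$.

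Setting $ms=p_0$ provides the required estimate with any $s$ in the range $1<s<(d-2)/|2+\gamma|$, which is nonempty precisely because $\gamma>-d$. The main conceptual point, and the heart of the argument, is that the same near-piece $a[f_1]$ governs both the upper integral estimate (via Young's inequality) and the pointwise lower bound (via the triangle inequality for $|v-w|$), collapsing the reverse H\"older and $A_1$-type properties of Riesz potentials into a single step without invoking general $A_\infty$ weight theory. The remaining work, including bookkeeping the constants and verifying the range of $s$, is routine.
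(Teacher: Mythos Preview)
Your proof is correct and arrives at the same conclusion as the paper via the same reduction: observe that $\tfrac{1}{d}-\tfrac{1}{2}+\tfrac{1}{2m}=0$ kills the $|Q|$ prefactor, and then bound $\bigl(\fint_Q a[f]^{ms}\bigr)^{1/(ms)}$ by $C\inf_Q a[f]$. The difference lies in how this last bound is obtained. The paper quotes two facts from \cite{GuGu16}: a reverse H\"older inequality $\bigl(\fint_Q a[f]^{ms}\bigr)^{1/(ms)}\leq C\fint_Q a[f]$ and the $A_1$ property $\fint_Q a[f]\leq C\inf_Q a[f]$, and concatenates them. You instead establish the combined estimate directly by the near/far decomposition $f=f\mathbf{1}_{kQ}+f\mathbf{1}_{(kQ)^c}$: the far contribution $a[f_2]$ is essentially constant on $Q$, while for the near contribution Young's inequality (upper bound) and the pointwise kernel bound $|v-w|^{2+\gamma}\geq (Cr)^{2+\gamma}$ (lower bound) both produce the same quantity $r^{d+2+\gamma}\fint_{kQ}f$, so the two collapse. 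Your argument is thus self-contained and avoids invoking $A_p$ weight theory as a black box, at the mild cost of reproving what is in effect the standard argument that Riesz potentials of nonnegative functions are $A_1$ weights with a quantified reverse-H\"older exponent. The admissible range $1<s<(d-2)/|2+\gamma|$, nonempty exactly when $\gamma>-d$, matches the paper's hypothesis.
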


\begin{proof}
  For $\gamma>-d$, Lemma 3.5 from \cite[Section 3]{GuGu16} says there is some $s>1$ such that
  \begin{align*}
    \left (\fint_{Q}a[f]^{ms}\;dv\right )^{\frac{1}{ms}} \leq C\fint_{Q}a[f]\;dv, \quad C=C(d,\gamma,ms).
  \end{align*}
  As it was also noted in \cite[Section 3]{GuGu16}, there is a universal constant such that
  \begin{align*}
    \fint_{Q}a[f]\;dv \leq C\inf\limits_{Q}a[f],
  \end{align*}	  
  which means also that
  \begin{align*}
    \sup \limits_{Q}a[f]^{-1} \leq C\left ( \fint_{Q}a[f]\;dv \right )^{-1}.	  
  \end{align*}	  
  Putting these two observations together it follows that
  \begin{align*}
    \left(\fint_{Q'} a[f]^{ms}\;dv\right)^{\frac{1}{2ms}}\left(\fint_{Q'} (a[f])^{-s}\;dv\right)^{\frac{1}{2s}}\leq C.
  \end{align*}
  Lastly, $m$ solves $\frac{1}{d}-\frac{1}{2}+\frac{1}{2m} = 0$ (it is its determining property), and the proposition is proved.
\end{proof}
The next one if the key proposition for the proof of (\ref{ineq:eps_prev}):
\begin{prop}\label{prop:cube averages for epsilon Poincare}
  There is $s>\tfrac{d}{2}$ such that given a cube $Q \subset B_R$  with $|Q| \leq 1$ we have
  \begin{align*}
    \left [ \sigma_{2,s}(Q,h[f],a[f])\right ]^2 \leq \theta(|Q|^{\tfrac{1}{d}}) := C(\|f\|_{L^\infty(L^{p(\gamma,d,s)})}, \|f\|_{L^\infty(L^{1})}) \langle R\rangle^{|2+\gamma|} |Q|^{\tfrac{\eta}{d}}.
  \end{align*}
  Here $\eta := 2-\tfrac{d}{s}>0$ and $p(\gamma,d,s):=\tfrac{ds}{d+s(d+\gamma)}$. 
  
  In particular, one can chose $s$ infinitesimally close to $\tfrac{d}{2}$, resulting in $p(\gamma,d,s)$ to be greater, but as close as one wishes to $\tfrac{d}{d+\gamma+2}$.
\end{prop}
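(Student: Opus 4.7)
The plan is to estimate the two averages making up $\sigma_{2,s}(Q,h[f],a[f])$ separately. Squaring the definition,
\[
[\sigma_{2,s}(Q,h[f],a[f])]^2 = |Q|^{2/d} \left(\fint_Q h[f]^s \, dv\right)^{1/s} \left(\fint_Q a[f]^{-s} \, dv\right)^{1/s}.
\]
I would bound the $h[f]$-average through the Hardy-Littlewood-Sobolev inequality, and the $a[f]^{-1}$-average through the pointwise lower bound from Corollary \ref{cor:lower bound a}. The $|Q|^{2/d}$ prefactor then combines with a $|Q|^{-1/s}$ coming from enlarging the domain of integration to produce precisely the desired $|Q|^{\eta/d}$.

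For the first factor, since $\gamma \in (-d,-2)$ we have $h[f] = C(d,d+\gamma)\, I_{d+\gamma} f$, where $I_{\alpha}$ is the Riesz potential of order $\alpha := d+\gamma \in (0,d-2)$. Hardy-Littlewood-Sobolev then yields
\[
\|h[f]\|_{L^s(\mathbb{R}^d)} \leq C(d,s,\gamma)\, \|f\|_{L^p(\mathbb{R}^d)}, \qquad \frac{1}{p} = \frac{1}{s} + \frac{d+\gamma}{d},
\]
which is exactly $p = p(\gamma,d,s) = \tfrac{ds}{d+s(d+\gamma)}$. The HLS hypotheses $1<p<s<\infty$ hold whenever $s>d/2$: since $\gamma<-2$ we have $-d/\gamma<d/2$, so $s>d/2$ forces $p>1$. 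A crude enlargement of the domain then gives
\[
\left(\fint_Q h[f]^s \, dv\right)^{1/s} \leq |Q|^{-1/s} \|h[f]\|_{L^s(\mathbb{R}^d)} \leq C\, |Q|^{-1/s}\, \|f\|_{L^\infty(0,T,L^{p}(\mathbb{R}^d))}.
\]

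For the second factor, Corollary \ref{cor:lower bound a} provides $a[f](v,t) \geq \ell\, \langle v\rangle^{2+\gamma}$, where $\ell$ depends only on $d$, $\gamma$, $T$, $\|\fin\|_{L^1}$, and $\|f\|_{L^\infty(0,T,L^{p}(\mathbb{R}^d))}$. Since $2+\gamma<0$ this reads $a[f]^{-1}(v,t) \leq \ell^{-1}\langle v\rangle^{|2+\gamma|}$, and on $Q \subset B_R$ we have $\langle v\rangle \leq \langle R\rangle$, so
\[
\left(\fint_Q a[f]^{-s} \, dv\right)^{1/s} \leq \ell^{-1}\, \langle R\rangle^{|2+\gamma|}.
\]

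Multiplying the two estimates together with $|Q|^{2/d}$ and noting that $2/d - 1/s = (2-d/s)/d = \eta/d$ yields
\[
[\sigma_{2,s}(Q,h[f],a[f])]^2 \leq C\,\|f\|_{L^\infty(0,T,L^{p})}\, \ell^{-1}\, \langle R\rangle^{|2+\gamma|}\, |Q|^{\eta/d},
\]
which is the claimed form of $\theta(|Q|^{1/d})$. Finally, letting $s \downarrow d/2$ forces $p(\gamma,d,s) \downarrow d/(d+\gamma+2)$ from above, which is the concluding remark. There is no serious obstacle here: the argument is a reduction to HLS plus the lower bound from the previous section. The only mildly delicate point is the exponent bookkeeping: the threshold $s=d/2$ simultaneously makes $\eta$ vanish and sends $p$ to the critical value $d/(d+\gamma+2)$, and this coincidence is precisely what allows the ensuing Moser iteration to reach $\gamma_*$.
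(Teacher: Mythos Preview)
Your proof is correct and follows essentially the same route as the paper: Hardy--Littlewood--Sobolev to bound $\left(\fint_Q h[f]^s\right)^{1/s}$ by $|Q|^{-1/s}\|f\|_{L^{p(\gamma,d,s)}}$, Corollary~\ref{cor:lower bound a} to bound $\left(\fint_Q a[f]^{-s}\right)^{1/s}$ by $\ell^{-1}\langle R\rangle^{|2+\gamma|}$, and then the exponent identity $2/d-1/s=\eta/d$. Your verification of the HLS hypothesis $p>1$ via $s>d/2>-d/\gamma$ is a small addition not made explicit in the paper.
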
  

\begin{proof}
  
Classical fractional integral estimates say that
\begin{align*}
  \|h[f]\|_{L^{\frac{dp}{d-p(d+\gamma)}}}\leq C_{d,d+\gamma}\|f\|_{L^p},\; \textnormal{ provided } p<\frac{d}{d+\gamma}.
\end{align*}
We want to choose $p$ so that $s = \frac{dp}{d-p(d+\gamma)}$, which results in $p(\gamma,d,s) = \frac{ds}{d+s(d+\gamma)}$. Therefore,
\begin{align*}
  |Q'|^{\frac{2}{d}}\left ( \fint_{Q'}h[f]^s\;dv \right )^{\frac{1}{s}} \leq C|Q'|^{\frac{2}{d}-\frac{1}{s}}\|f\|_{L^{p(\gamma,d,s)}}.
\end{align*}
We can take $s$ larger but arbitrarily close to $\tfrac{d}{2}$ (to have $\frac{2}{d}-\tfrac{1}{s}$ positive) which results in $p(\gamma,d,s)$ be strictly greater, but arbitrarily close to, $d/(d+2+\gamma)$. Hence,
\begin{align*}
  |Q|^{\frac{2}{d}}\left ( \fint_{Q'}h[f]^s\;dv \right )^{\frac{1}{s}} \leq C|Q'|^{\frac{\eta}{d}} \|f\|_{L^{p(\gamma,d,s)}}.
\end{align*}

Thanks to the bound from below for $a[f]$ from Corollary \ref{cor:lower bound a}, we have
\begin{align*}
  \left(\fint_{Q'} a[f]^{-s}\;dv\right)^{\frac{1}{s}}\leq C\langle R\rangle^{|2+\gamma|},\;\forall\; Q'\subset 8Q.
\end{align*}
We work towards estimating the other factor.

It follows that
\begin{align*}
  |Q'|^{\frac{2}{d}}\left ( \fint_{Q'}h[f]^s\;dv \right )^{\frac{1}{s}}  \left (\fint_{Q'}a[f]^{-s}\;dv \right )^{\frac{1}{s}} \leq C\langle R\rangle^{|2+\gamma|}|Q'|^{\frac{\eta}{d}} \|f\|_{L^{p(\gamma,s)}}.
\end{align*}
This estimate is for all cubes $Q'$ such that $Q'\subset 8Q$, which proves the proposition. 

\end{proof}

An immediate consequence of Theorem \ref{thm:weighted integral inequalities} and Proposition \ref{prop:cube averages for Sobolev} is the following inequality. 
\begin{cor}\label{cor:a_prev_paper_sob}
  There is a universal constant $C$ such that for all $\phi$ we have
  \begin{align*}
\left(  \int_{\mathbb{R}^d}\;a^m[f]\phi^{2m}\;dv\right)^{\frac{1}{m}} \leq C \int_{\mathbb{R}^d}a[f]\;|\nabla \phi|^2  \;dv.
  \end{align*}
\end{cor}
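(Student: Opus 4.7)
The plan is to obtain this global weighted Sobolev inequality as a direct application of Theorem \ref{thm:weighted integral inequalities} on a sufficiently large cube, combined with the uniform bound on $\sigma_{2m,s}(Q, a[f]^m, a[f])$ provided by Proposition \ref{prop:cube averages for Sobolev}.

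By a standard density argument, it suffices to prove the stated inequality for $\phi \in C^1_c(\mathbb{R}^d)$. Given such a $\phi$, I fix any cube $Q \subset \mathbb{R}^d$ containing $\textnormal{supp}(\phi)$. Since $\phi$ is supported in $Q$, Theorem \ref{thm:weighted integral inequalities} applied with the exponent $r = 2m$ and the weights $\omega_1 = a[f]^m$, $\omega_2 = a[f]$ yields
$$\left(\int_Q a[f]^m \phi^{2m}\,dv\right)^{1/(2m)} \leq \mathcal{C}_{2m,s}(Q, a[f]^m, a[f])\left(\int_Q a[f]\,|\nabla\phi|^2\,dv\right)^{1/2}.$$

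Next, Proposition \ref{prop:cube averages for Sobolev} produces an exponent $s>1$, depending only on $d$ and $\gamma$, such that $\sigma_{2m,s}(Q', a[f]^m, a[f])$ is controlled by a constant depending only on $d$ and $\gamma$ for every cube $Q'\subset \mathbb{R}^d$; in particular this holds for all $Q'\subset 8Q$. Taking the supremum in the definition of $\mathcal{C}_{2m,s}$ then gives $\mathcal{C}_{2m,s}(Q, a[f]^m, a[f]) \leq C(d,\gamma)$, crucially \emph{uniform} both in the enclosing cube $Q$ and in the function $f$. Squaring the displayed inequality and noting that the integrals over $Q$ coincide with integrals over $\mathbb{R}^d$ (since $\phi$ vanishes outside $Q$) delivers the stated estimate.

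I do not expect any real obstacle: all of the nontrivial analytic content has already been packaged into Theorem \ref{thm:weighted integral inequalities} and Proposition \ref{prop:cube averages for Sobolev}. The only subtle point worth highlighting is that the Sawyer--Wheeden constant on the right-hand side must remain uniformly bounded as the cube $Q$ is enlarged to cover the support of an arbitrary test function $\phi$, and this is exactly the uniformity built into Proposition \ref{prop:cube averages for Sobolev}.
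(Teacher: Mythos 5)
Your proposal is correct and is precisely the argument the paper intends when it calls the corollary an ``immediate consequence'' of Theorem \ref{thm:weighted integral inequalities} and Proposition \ref{prop:cube averages for Sobolev}: apply the Sawyer--Wheeden theorem with $r=2m$, $\omega_1=a[f]^m$, $\omega_2=a[f]$ on a cube containing $\textnormal{supp}(\phi)$, use the cube-uniform bound on $\sigma_{2m,s}$ to control $\mathcal{C}_{2m,s}$, and square. You also correctly identify the one point that makes the globalization work, namely the uniformity of the constant over arbitrarily large cubes.
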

Corollary \ref{cor:a_prev_paper_sob} implies, via an elementary interpolation argument, a space-time integral inequality for functions $\phi:\mathbb{R}^d\times [0,T]\to\mathbb{R}$.
\begin{cor}\label{cor:sob_space_time}
  There is a universal constant $C$ such that
\begin{align}\label{sob_space_time}
  \int_0^T \int_{\mathbb{R}^d}\;a[f]\phi^{q}\;dvdt \leq C\left( \int_0^T \int_{\mathbb{R}^d}a[f]\;|\nabla \phi|^2  \;dvdt+\sup \limits_{(0,T	)}\int_{\mathbb{R}^d}\phi^2\;dv\right)^{q/2}.
\end{align}

\end{cor}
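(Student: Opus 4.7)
The plan is to derive \eqref{sob_space_time} from Corollary \ref{cor:a_prev_paper_sob} by a H\"older interpolation in $v$ followed by a trivial sup over $t$. The arithmetic key is already built into the definitions $m=\tfrac{d}{d-2}$, $q=2(1+\tfrac{2}{d})$ in \eqref{eqn:q and m}: one has $\tfrac{m}{m-1}=\tfrac{d}{2}$ and
\begin{align*}
  \tfrac{1}{m}\cdot 2m + \bigl(1-\tfrac{1}{m}\bigr)\cdot 2 \;=\; 2+\tfrac{4}{d} \;=\; q, \qquad 1-\tfrac{1}{m}=\tfrac{2}{d},
\end{align*}
so H\"older with exponents $m$ and $\tfrac{d}{2}$ will split $a[f]\phi^q$ into exactly the two pieces controlled by Corollary \ref{cor:a_prev_paper_sob} and the right hand side of \eqref{sob_space_time}.

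First, for each fixed $t\in (0,T)$, I would write $a[f]\phi^q = (a[f]^m\phi^{2m})^{1/m}\cdot(\phi^2)^{1-1/m}$ and apply H\"older in $v$ to obtain
\begin{align*}
  \int_{\mathbb{R}^d} a[f]\phi^q\;dv \leq \left(\int_{\mathbb{R}^d} a[f]^m\phi^{2m}\;dv\right)^{1/m}\left(\int_{\mathbb{R}^d}\phi^2\;dv\right)^{2/d}.
\end{align*}
Corollary \ref{cor:a_prev_paper_sob} then bounds the first factor by $C\int_{\mathbb{R}^d}a[f]|\nabla\phi|^2\,dv$, producing a pointwise-in-$t$ estimate with the weighted Dirichlet energy to the first power and the unweighted $L^2$ norm raised to the power $\tfrac{2}{d}$.

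Next, I would integrate in $t\in(0,T)$ and pull the factor $(\int\phi^2\,dv)^{2/d}$ outside the time integral, bounding it by its supremum. Writing $A:=\int_0^T\!\int_{\mathbb{R}^d}a[f]|\nabla\phi|^2\,dv\,dt$ and $B:=\sup_{(0,T)}\int_{\mathbb{R}^d}\phi^2\,dv$, the resulting inequality reads $\int_0^T\!\int a[f]\phi^q\,dv\,dt \leq C\, A\, B^{2/d}$. The elementary bound $A\,B^{2/d}\leq (A+B)^{1+2/d}=(A+B)^{q/2}$, valid since $A,B\geq 0$, then gives precisely \eqref{sob_space_time}. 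The entire argument is a single H\"older interpolation plus a sup bound, so no genuine obstacle is expected; the only verification needed is that the H\"older exponents line up with the Sobolev-type exponent $q=2+4/d$, which holds by construction of $m$ and $q$.
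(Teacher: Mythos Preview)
Your proposal is correct and follows essentially the same approach as the paper: a H\"older interpolation in $v$ between the weighted $L^{2m}$ norm (controlled via Corollary \ref{cor:a_prev_paper_sob}) and the unweighted $L^2$ norm, followed by taking the supremum in time and the elementary bound $A\,B^{2/d}\le (A+B)^{1+2/d}$. The paper phrases the interpolation via an auxiliary parameter $\theta$ with $\tfrac{1}{q}=\tfrac{1-\theta}{2m}+\tfrac{\theta}{2}$, but this leads to the identical pointwise-in-$t$ inequality you write down, and the remaining steps coincide.
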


\begin{proof}

We follow the standard proof of this space-time inequality (see proof of Theorem 2.12 and 2.13 in \cite{GuGu16}). First, we estimate the integral of $|\phi|^q$ with weight $a[f]$ by interpolation
\begin{align*}
  \int_{\mathbb{R}^d}a[f]|\phi|^q\;dv & = \int_{\mathbb{R}^d}a[f]\phi^{q\theta+q(1-\theta)}\;dv\\
    & \leq \left (\int_{\mathbb{R}^d}a[f]^{\frac{2m}{(1-\theta)q}}|\phi|^{2m} \;dv \right )^{\frac{(1-\theta)q}{2m}}\left (\int_{\mathbb{R}^d}\phi^2\;dv \right )^{\frac{q\theta}{2}}.
\end{align*}
The exponent $\theta \in (0,1)$ is determined from the relation $\tfrac{1}{q}=\tfrac{1-\theta}{2m}+\tfrac{\theta}{2}$. Simplifying, we obtain
\begin{align*}
  \int_{\mathbb{R}^d}a[f]|\phi|^q\;dv & \leq \left (\int_{\mathbb{R}^d}a[f]^{m}|\phi|^{2m} \;dv \right )^{\frac{1}{m}}\left (\int_{\mathbb{R}^d}\phi^2\;dv \right )^{\frac{m-1}{m}}. 
\end{align*}
Now, by Corollary \ref{cor:a_prev_paper_sob}
\begin{align*}
  \int_{\mathbb{R}^d}a[f]|\phi|^q\;dv \leq C\left (\int_{\mathbb{R}^d}a[f]|\nabla \phi|^2\;dv\right )\left ( \int_{\mathbb{R}^d}\phi^2\;dv \right )^{\frac{m-1}{m}}. 
\end{align*}
Integrating this over time we have
\begin{align*}
  \int_0^T\int_{\mathbb{R}^d}a[f]|\phi|^q\;dvdt \leq C\left ( \int_0^T \int_{\mathbb{R}^d}a[f]|\nabla \phi|^2\;dvdt \right )\left (\sup \limits_{(0,T)}\int_{\mathbb{R}^d}\phi^2\;dv \right )^{\frac{m-1}{m}}.
\end{align*}
From this last inequality it follows trivially that
\begin{align*}
  \int_0^T\int_{\mathbb{R}^d}a[f]|\phi|^q\;dvdt \leq C\left ( \int_0^T \int_{\mathbb{R}^d}a[f]|\nabla \phi|^2\;dvdt + \sup \limits_{(0,T)}\int_{\mathbb{R}^d}\phi^2\;dv \right )^{2-\frac{1}{m}}.
\end{align*}
Noting that $2-\frac{1}{m} = 2-\frac{d-2}{d} = 1+\frac{2}{d} = \frac{q}{2}$, the corollary is proved. 
\end{proof} 

The other important use of Theorem \ref{thm:weighted integral inequalities} is in proving a $\varepsilon$-Poincar\'e inequality, which also relies crucially on Proposition \ref{prop:cube averages for epsilon Poincare} and the $L^p$ bound on $f$.

\begin{cor}\label{cor:Poincare via Lp bound}
 Let $R>0$ and $\varepsilon \in (0,\varepsilon_0)$.  For any $\phi$ supported in $B_R(0)$ we have
\begin{align*}
  \int_{\mathbb{R}^d}h[f]\;\phi^2\;dv \leq \varepsilon \int_{\mathbb{R}^d}a[f]|\nabla \phi |^2 \;dv+ C(\fin,R)\varepsilon^{-\frac{2}{\eta}}  \int_{\mathbb{R}^d}a[f]\phi^2\;dv.
\end{align*}
  Here, $\varepsilon_0 := \theta(1)/C(d,\gamma)$, $\theta(r)$ and $\eta$ are as in Proposition \ref{prop:cube averages for epsilon Poincare}, and
  \begin{align*}
    C(\fin,R) = (4C)^{\frac{2}{\eta}}\langle R\rangle^{\frac{2|2+\gamma|}{\eta}}\|\fin\|_{L^{p(\gamma,d,s)}}^{\frac{2}{\eta}}.	  
  \end{align*}	  
\end{cor}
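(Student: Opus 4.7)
The plan is to derive the inequality by tiling $B_R$ with a grid of small cubes and applying Theorem \ref{thm:weighted integral inequalities} on each, with the cube side length $\rho \in (0,1)$ calibrated to $\varepsilon$ at the end. Partition $B_R$ into a disjoint family of cubes $\{Q_i\}$ of common side length $\rho$. On each $Q_i$ meeting the support of $\phi$, write $\phi = (\phi - \bar\phi_{Q_i}) + \bar\phi_{Q_i}$ with $\bar\phi_{Q_i} := \fint_{Q_i}\phi$, so that $\phi^2 \leq 2(\phi - \bar\phi_{Q_i})^2 + 2\bar\phi_{Q_i}^2$. Since $\phi - \bar\phi_{Q_i}$ has zero mean on $Q_i$, Theorem \ref{thm:weighted integral inequalities} applies with exponent $2$ and weights $\omega_1 = h[f]$, $\omega_2 = a[f]$; combined with Proposition \ref{prop:cube averages for epsilon Poincare}, this gives
\[
\int_{Q_i} h[f](\phi - \bar\phi_{Q_i})^2\,dv \leq K\rho^{\eta}\int_{Q_i} a[f]|\nabla\phi|^2\,dv,
\]
with $K = C\langle R\rangle^{|2+\gamma|}\|f\|_{L^\infty(L^{p(\gamma,d,s)})}$.

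For the ``constant'' remainder, Cauchy--Schwarz yields $\bar\phi_{Q_i}^2 \leq \fint_{Q_i}\phi^2$, hence $\bar\phi_{Q_i}^2\int_{Q_i}h[f]\,dv \leq (\fint_{Q_i}h[f]\,dv)\int_{Q_i}\phi^2\,dv$. Applying Jensen in $L^s$ and the Hardy--Littlewood--Sobolev bound extracted in the proof of Proposition \ref{prop:cube averages for epsilon Poincare} gives $\fint_{Q_i}h[f]\,dv \leq C\rho^{\eta - 2}\|f\|_{L^{p(\gamma,d,s)}}$, while Corollary \ref{cor:lower bound a} yields $a[f] \geq \ell\langle R\rangle^{2+\gamma}$ on $B_R$, so that $\int_{Q_i}\phi^2\,dv \leq \ell^{-1}\langle R\rangle^{|2+\gamma|}\int_{Q_i}a[f]\phi^2\,dv$. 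Summing over $i$ produces
\[
\int_{\mathbb{R}^d}h[f]\phi^2\,dv \leq 2K\rho^{\eta}\int_{\mathbb{R}^d}a[f]|\nabla\phi|^2\,dv + C'\rho^{\eta-2}\int_{\mathbb{R}^d}a[f]\phi^2\,dv,
\]
with $C' = C'(\fin,R)$ absorbing $\ell^{-1}\langle R\rangle^{|2+\gamma|}\|f\|_{L^{p(\gamma,d,s)}}$. Choosing $\rho = (\varepsilon/(2K))^{1/\eta}$ makes the first coefficient equal to $\varepsilon$, and the constraint $\rho \leq 1$ corresponds exactly to $\varepsilon \leq \varepsilon_0 = \theta(1)/C(d,\gamma)$. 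Then $\rho^{\eta-2} = (\varepsilon/(2K))^{1-2/\eta}$, which for $\varepsilon < 1$ is dominated by $C(\fin,R)\varepsilon^{-2/\eta}$ once the $K$-dependent constants are collected, yielding the advertised constant of order $(4C)^{2/\eta}\langle R\rangle^{2|2+\gamma|/\eta}\|\fin\|_{L^{p(\gamma,d,s)}}^{2/\eta}$.

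The main technical difficulty is the opposing scaling of the two pieces in $\rho$: the cube-by-cube Poincaré inequality improves as $\rho \to 0$ (gaining $\rho^{\eta}$), while the local $L^1$-average of $h[f]$ deteriorates ($\rho^{\eta - 2}$). Balancing them through a single parameter $\rho$ is what forces the exponent $-2/\eta$ on $\varepsilon$ and the matching power on $\|\fin\|_{L^p}$; one must also check that $K$ enters both terms symmetrically, so that no spurious negative power of $\|f\|_{L^p}$ survives in $C(\fin,R)$. A minor bookkeeping point is that Theorem \ref{thm:weighted integral inequalities} uses the supremum of $\sigma_{2,s}$ over sub-cubes of $8Q_i$ rather than $Q_i$ alone, but Proposition \ref{prop:cube averages for epsilon Poincare} is already phrased uniformly over such sub-cubes sitting inside $B_R$, so this requires no additional argument.
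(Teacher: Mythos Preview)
Your argument is correct and follows the same architecture as the paper's proof: tile the support of $\phi$ by cubes of side $\rho$, split $\phi = (\phi-\bar\phi_{Q_i})+\bar\phi_{Q_i}$ on each cube, control the oscillation piece via Theorem~\ref{thm:weighted integral inequalities} together with Proposition~\ref{prop:cube averages for epsilon Poincare}, sum, and choose $\rho$ as a function of $\varepsilon$.

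The one place where you diverge from the paper is the handling of the constant piece $\bar\phi_{Q_i}^2\int_{Q_i}h[f]$. The paper uses the \emph{weighted} Cauchy--Schwarz inequality
\[
(\phi)_Q^2 \le \Big(\fint_Q a[f]\,\phi^2\,dv\Big)\Big(\fint_Q a[f]^{-1}\,dv\Big)
\]
and then invokes the universal bound $\big(\fint_Q h[f]\big)\big(\fint_Q a[f]^{-1}\big)\le C|Q|^{-2/d}$, which follows from the $A_1$ property of $a[f]$ and the universal bound on the quantity \eqref{eqn:weighted integral quotient}. This yields a second term $4Cr^{-2}\int a[f]\phi^2$ with $C$ depending only on $d,\gamma$; the $\fin$-dependence of the final constant enters solely through the choice of $r=r(\varepsilon)$. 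You instead use the unweighted Jensen bound $\bar\phi_{Q_i}^2\le\fint_{Q_i}\phi^2$, estimate $\fint_{Q_i}h[f]\le C\rho^{\eta-2}\|f\|_{L^p}$ directly, and convert $\int_{Q_i}\phi^2$ to $\int_{Q_i}a[f]\phi^2$ via the pointwise lower bound from Corollary~\ref{cor:lower bound a}. This is perfectly valid, but it introduces an extra factor $\ell^{-1}\langle R\rangle^{|2+\gamma|}$ into your constant $C'$, so the $C(\fin,R)$ you obtain does not literally equal the displayed formula $(4C)^{2/\eta}\langle R\rangle^{2|2+\gamma|/\eta}\|\fin\|_{L^{p(\gamma,d,s)}}^{2/\eta}$; it carries additional dependence on $\|\fin\|_{L^1}$, $\|\fin\|_{L^1_2}$, and $T$ through $\ell$. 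For the downstream Moser iteration this is harmless, and your $\varepsilon^{-2/\eta}$ scaling is exactly right (indeed your raw bound $\varepsilon^{1-2/\eta}$ is slightly sharper).
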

  
\begin{proof}
  Let $Q$ be any cube in $\mathbb{R}^d$ with $|Q|\leq 1$. Since $\phi = \phi-(\phi)_Q + (\phi)_Q$ it is elementary that
  \begin{align}\label{eqn:Poincare averages inequality}
    \int_{Q} h[f]\phi^2\;dv \leq 4\int_{Q}h[f]( \phi -(\phi)_Q)^2\;dv+4(\phi)_Q^2\int_{Q}h[f]\;dv,
  \end{align}
  where $(\phi)_Q$ denotes the average over $Q$,
  \begin{align*}
    (\phi)_Q = \fint_Q \phi\;dv.
  \end{align*}
  Applying H\"older's inequality to $\int_Q a^{-\frac{1}{2}}(a^{\frac{1}{2}}|\phi|)\;dv$, it follows that
  \begin{align*}
    (\phi)_Q^2 \leq \left ( \fint_Q |\phi|\;dv\right )^2 \leq \left ( \fint_Q a \phi^2\;dv\right ) \left ( \fint_Q a^{-1}\;dv \right ).
  \end{align*}
  Therefore,  
  \begin{align*}
    4(\phi)_Q^2\int_Qh\;dv & \leq 4 \left (\fint_Qh\;dv\right) \left(\fint_Q a^{-1}\;dv \right )\left (\int_Q a\phi^2\;dv\right ) \leq 4C|Q|^{-\frac{2}{d}}\int_{Q}a\phi^2\;dv.
  \end{align*}
  Now, we bound the first term on the right of \eqref{eqn:Poincare averages inequality} by means of Theorem \ref{thm:weighted integral inequalities}, so
  \begin{align*}
    \int_{Q} h \phi^2\;dv & \leq 4\mathcal{C}_{2,2}(Q,h,a)\int_{Q}a|\nabla \phi|^2\;dv+4C|Q|^{-\frac{2}{d}}\int_{Q}a \phi^2\;dv.
  \end{align*}
  Then, by Proposition \ref{prop:cube averages for epsilon Poincare}, we conclude that
  \begin{align*}
    \int_{Q} h \phi^2\;dv & \leq 4C\theta(r)\int_{Q}a|\nabla \phi|^2\;dv+4C|Q|^{-\frac{2}{d}}\int_{Q}a \phi^2\;dv, \;r := |Q|^{\tfrac{1}{d}}.
  \end{align*}
  where $C=C(d,\gamma)$ and $\theta$ is as in Proposition \ref{prop:cube averages for epsilon Poincare}. Adding up these inequalities for each $Q$ of the form $r[0,1]^d +rz$, $z\in\mathbb{Z}^d$, 
  \begin{align*}
    \int_{\mathbb{R}^d}h \phi^2\;dv & \leq 4\theta(r)\int_{\mathbb{R}^d}a|\nabla \phi|^2\;dv+4Cr^{-2} \int_{\mathbb{R}^d}a\phi^2\;dv.
  \end{align*}
  Let $\varepsilon \in (0,\eta(1)/(4C) )$, then there is some $r \in (0,1)$ such that $4\theta(r) = \varepsilon$, namely
  \begin{align*}
    \varepsilon = 4C\langle R\rangle^{|2+\gamma|}\|\fin\|_{L^{p(\gamma,d,s)}}r^\eta.
  \end{align*}
  Indeed, this $r=r(\varepsilon)$ is such that
  \begin{align*}
    r^{-2} = (4C)^{\frac{2}{\eta}}\langle R\rangle^{\frac{2|2+\gamma|}{\eta}}\|\fin\|_{L^{p(\gamma,d,s)}}^{\frac{2}{\eta}}\varepsilon^{-\frac{2}{\eta}}.
  \end{align*}
  Thus, 
  \begin{align*}
    \int_{\mathbb{R}^d}h \phi^2\;dv & \leq \varepsilon\int_{\mathbb{R}^d}a|\nabla \phi|^2\;dv+C(\fin,R)\varepsilon^{-\frac{2}{\eta}} \int_{\mathbb{R}^d}a\phi^2\;dv,
  \end{align*}
  and the corollary is proved. 

\end{proof}

\section{Moser's iteration}\label{sec:Moser iteration}

A $\varepsilon$-Poincare, inequality like the one obtained in Corollary \ref{cor:Poincare via Lp bound}, when valid, yields an $L^\infty$ estimate for the respective solution $f$ of \eqref{eqn:isotropic equation}. This is the case regardless of the value of $\gamma \in [-d,-2]$, as was observed for the Landau equation in \cite{GuGu16}. 

In Proposition \ref{prop:cube averages for epsilon Poincare} and Corollary \ref{cor:Poincare via Lp bound} we have proved that the $\varepsilon$-Poincar\'e inequality holds if $f\in L^\infty(0,T, L^p(\mathbb{R}^d))$ for $p>\frac{d}{d+2+\gamma}$. In view of Lemma \ref{Lem:propag_iso}, solutions to  \eqref{eqn:isotropic equation} belong to $L^p(\mathbb{R}^d)$ with $p>\frac{d}{d+2+\gamma}$ if the initial data belong to the same $L^p(\mathbb{R}^d)$ space and, most importantly, if $\frac{d}{d+2+\gamma} \le \frac{d+\gamma}{-\gamma-2}$. This last inequality holds true for  $\gamma\in (\gamma_*, -2]$, with $\gamma_*$ the unique solution to
$$
\frac{d}{d+2+\gamma} = \frac{d+\gamma}{-\gamma-2}.
$$
Observe that for $\gamma\in(-d,-2)$ the function $\gamma \mapsto d/(d+\gamma+2)$ is strictly decreasing, while $\gamma \mapsto (d+\gamma)/(-\gamma-2)$ is strictly increasing. At $\gamma=-d$ they are equal to $\frac{d}{2}$ and $0$, respectively and at $\gamma=-2$ they are equal to $1$ and $+\infty$, respectively. It follows there is exactly one $\gamma_*\in(-d,-2)$ where they agree. Alternatively, after solving the respective quadratic equation one can see that $\gamma_*$ is given by the formula
\begin{align*}
  \gamma_* = -1-\tfrac{3}{2}d+\tfrac{1}{2}\sqrt{5d^2-4d+4}.
\end{align*}

So, if $\gamma\in (\gamma_*,-2]$, it should be no surprise that the $L^\infty$ estimate for $f$ follows. As mentioned earlier, there are several ways how to show that. We will follow the Moser's approach introduced in \cite{GuGu16}, with consists on estimating the norms 
\begin{align*}
  \left ( \int_0^T\int_{\mathbb{R}^d} a[f]\rho^2 f^{p}\;dvdt \right )^{\frac{1}{p}},
\end{align*}  
for ever increasing powers of $p$ and varying cut off functions $\rho$.  Although the arguments are very similar to those in \cite{GuGu16}, we go in detail over their derivation here. This will complete the proof of Theorem \ref{theorem:isotropic global smooth solutions}.

We start by deriving an energy identity, then we will use the $\varepsilon$-Poincar\'e inequality to control the most problematic term (the integral involving a $h[f]f^p$ term), and arrive at an energy inequality. This energy inequality, together with the space-time weighted inequality (\ref{sob_space_time}), will be repeatedly used to bound the $L^p$ norms as $p \to \infty$. 
 
\begin{prop}\label{prop:energy identity}
Let $p>1$ and let $\rho \in C^2_c(\mathbb{R}^d)$, then
\begin{align*}
  \frac{d}{dt}\int_{\mathbb{R}^d}\rho^2 f^p\;dv & = -\frac{4(p-1)}{p}\int_{\mathbb{R}^d}a[f]|\nabla (\rho f^{p/2})|^2\;dv+\int_{\mathbb{R}^d}(c_1(p)|\nabla \rho|^2- \Delta \rho^2) a[f]f^p\;dv\\
    & \;\;\;\;+(p-1)\int_{\mathbb{R}^d}(-\Delta a[f])\rho^2 f^p\;dv-c_2(p)\int_{\mathbb{R}^d}a[f]f^{p/2}(\nabla (\rho f^{p/2}),\nabla \rho)\;dv,
\end{align*}
where $c_1(p) = 4(p-\frac{(p-1)}{p})$, $c_2(p) = 4(p-2\frac{p-1}{p})$.

\end{prop}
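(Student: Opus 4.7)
The plan is to multiply the equation by $p\rho^2 f^{p-1}$ and integrate, then work systematically through a sequence of integrations by parts. Starting from the divergence form $\partial_t f = \dive(a[f]\nabla f - f\nabla a[f])$, the left hand side gives $\frac{d}{dt}\int \rho^2 f^p\,dv$, and integrating the right by parts produces $-p\int \nabla(\rho^2 f^{p-1})\cdot(a[f]\nabla f - f\nabla a[f])\,dv$. Expanding $\nabla(\rho^2 f^{p-1}) = 2\rho f^{p-1}\nabla\rho + (p-1)\rho^2 f^{p-2}\nabla f$ yields four integrals. The natural substitution $u := f^{p/2}$ converts them via $f^{p-1}\nabla f = \tfrac{2}{p} u \nabla u$ and $f^{p-2}|\nabla f|^2 = \tfrac{4}{p^2}|\nabla u|^2$, so the raw identity reads
\begin{align*}
\tfrac{d}{dt}\!\int \rho^2 f^p = -\tfrac{4(p-1)}{p}\!\int a\rho^2|\nabla u|^2 - 4\!\int a\rho u \nabla u\cdot\nabla\rho + 2p\!\int \rho u^2 \nabla\rho\cdot\nabla a + 2(p-1)\!\int \rho^2 u\nabla u\cdot\nabla a.
\end{align*}

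The first structural rewrite uses the pointwise identity $\rho^2|\nabla u|^2 = |\nabla(\rho u)|^2 - 2\rho u\nabla u\cdot\nabla\rho - u^2|\nabla\rho|^2$ to extract the desired dissipative quantity $-\tfrac{4(p-1)}{p}\int a[f]|\nabla(\rho f^{p/2})|^2\,dv$ and produce correction terms proportional to $\int au^2|\nabla\rho|^2$ and $\int a \rho u\nabla u\cdot\nabla\rho$. Next, I would process the two $\nabla a$ integrals jointly. Since $\nabla(\rho^2 u^2) = 2\rho u^2\nabla\rho + 2\rho^2 u\nabla u$, one can write
\[
2p\!\int \rho u^2\nabla\rho\cdot\nabla a + 2(p-1)\!\int \rho^2 u\nabla u\cdot\nabla a = (p-1)\!\int \nabla(\rho^2 u^2)\cdot\nabla a + 2\!\int \rho u^2\nabla\rho\cdot\nabla a.
\]
The first summand integrates by parts to $-(p-1)\int \rho^2 u^2 \Delta a\,dv$, which is exactly the reactive contribution $(p-1)\int(-\Delta a[f])\rho^2 f^p\,dv$ appearing in the statement.

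The remaining piece $2\int \rho u^2\nabla\rho\cdot\nabla a = \int u^2 \nabla\rho^2 \cdot \nabla a$ is dealt with by a second integration by parts onto $u^2\nabla\rho^2$: it equals $-\int a u^2 \Delta \rho^2\,dv - \int a\,\nabla u^2\cdot\nabla \rho^2\,dv$, where $\nabla u^2\cdot\nabla\rho^2 = 4\rho u\nabla u\cdot\nabla\rho$. This produces the $-\int a[f]f^p \Delta\rho^2\,dv$ term and a final cross-term contribution. Collecting all cross-terms and $|\nabla\rho|^2$ terms, and writing the cross-term combination back in the invariant form $\int a f^{p/2}(\nabla(\rho f^{p/2}),\nabla\rho)\,dv = \int au^2|\nabla\rho|^2\,dv + \int a\rho u \nabla u \cdot \nabla\rho\,dv$, gives the identity in the stated shape with explicit constants $c_1(p),c_2(p)$ determined by the algebra.

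The work is entirely algebraic once the integrations by parts are set up, so there is no real analytic obstacle; the main nuisance is the bookkeeping of coefficients across the three integrations by parts (the $|\nabla(\rho u)|^2$ completion, the $\nabla(\rho^2 u^2)\cdot\nabla a$ grouping, and the $\nabla\rho^2$ integration by parts). A useful sanity check along the way is to verify the resulting identity collapses to the one in Lemma~\ref{Lem:propag_iso} when $\rho\equiv 1$, and to the standard heat-equation energy identity when $a[f]\equiv 1$, which pins down the coefficients uniquely.
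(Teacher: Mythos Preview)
Your approach is essentially the same as the paper's: multiply by $p\rho^2 f^{p-1}$, integrate by parts once against the divergence form, then perform further integrations by parts on the resulting four terms and complete to $|\nabla(\rho f^{p/2})|^2$; the only difference is the order (you complete the square first and group the two $\nabla a$ terms before integrating by parts, while the paper handles the four terms (I)--(IV) individually and completes the square at the end), which is purely cosmetic. One caveat worth knowing: the paper's displayed computation of term (II) uses the coefficient $\tfrac{p-1}{p}$ where it should be $\tfrac{1}{p}$ (since $f^{p-1}\nabla f=\tfrac{1}{p}\nabla f^p$), so if you carry out your algebra carefully you will obtain $c_1(p)=\tfrac{4(p+1)}{p}$ and $c_2(p)=\tfrac{8}{p}$ rather than the values stated in the proposition---this discrepancy is harmless downstream, as the proof of Proposition~\ref{prop:energy inequality} only uses $c_1,c_2\le C(d,\gamma)p^2$.
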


\begin{proof} 
For simplicity we shall write $a$ instead of $a[f]$. From the equation and integration by parts, we have 
\begin{align*} 
  \frac{d}{dt}\int_{\mathbb{R}^d}\rho^2 f^p\;dv & = p \int_{\mathbb{R}^d} \rho^2 f^{p-1}\partial_tf \;dv = -p \int_{\mathbb{R}^d} (\nabla (\rho^2 f^{p-1}),a\nabla f-f\nabla a)\;dv.
\end{align*}
The  integral on the right is equal to the sum of four terms, which we denote $(\textnormal{I}),(\textnormal{II}),(\textnormal{III})$, and $(\textnormal{IV})$, and which we now analyze one by one.

First, note that $(\nabla f^{p-1},\nabla f) = (p-1)f^{p-2}|\nabla f|^2 = (p-1)(4/p^2)|\nabla f^{\frac{p}{2}}|$, therefore
\begin{align*}
  (\textnormal{I}) = \int_{\mathbb{R}^d}\rho^2 (\nabla f^{p-1},a\nabla f)\;dv = \frac{4(p-1)}{p^2}\int_{\mathbb{R}^d}\rho^2 a|\nabla f^{\frac{p}{2}}|^2\;dv.
\end{align*}
Next, we rewrite each of the other three terms using integration by parts, as follows 
\begin{align*}
  (\textnormal{II}) = \int_{\mathbb{R}^d}f^{p-1}(\nabla \rho^2,a\nabla f)\;dv & = \frac{p-1}{p}\int_{\mathbb{R}^d}(\nabla \rho^2,a\nabla f^{p})\;dv\\
    & = -	\frac{p-1}{p}\int_{\mathbb{R}^d}f^p(\nabla \rho^2,\nabla a)+f^pa\Delta \rho^2\;dv,
\end{align*}
\begin{align*} 
  (\textnormal{III}) = \int_{\mathbb{R}^d}\rho^2 (\nabla f^{p-1},-f\nabla a)\;dv & = \frac{p-1}{p}\int_{\mathbb{R}^d}\rho^2 (\nabla f^{p},-\nabla a)\;dv \\ 
   & = \frac{p-1}{p}\int_{\mathbb{R}^d}\rho^2 f^p\Delta a+f^p(\nabla \rho^2,\nabla a)\;dv,
\end{align*} 
\begin{align*}
  (\textnormal{IV}) = \int_{\mathbb{R}^d}f^{p-1}(\nabla \rho^2,-f\nabla a)\;dv & = -\int_{\mathbb{R}^d}f^p(\nabla \rho^2,\nabla a)\;dv \\ 
    & = 2\int_{\mathbb{R}^d} a f^{p/2}(\nabla f^{p/2},\nabla \rho^2)\;dv +\int_{\mathbb{R}^d} a f^p \Delta \rho^2\;dv.
\end{align*}
Adding these identities up, we have
\begin{align*}
  \frac{d}{dt}\int_{\mathbb{R}^d}\rho^2 f^p\;dv & = -\frac{4(p-1)}{p}\int_{\mathbb{R}^d}\rho^2 a|\nabla f^{p/2}|^2\;dv -\int_{\mathbb{R}^d}(\Delta \rho^2) af^p\;dv\\
    & \;\;\;\;+(p-1)\int_{\mathbb{R}^d}(-\Delta a)\rho^2 f^p\;dv-2p\int_{\mathbb{R}^d}af^{p/2}(\nabla f^{p/2},\nabla \rho^2)\;dv.
\end{align*}
We use the elementary identity $\rho \nabla f^{p/2}= \nabla (\rho f^{p/2})-  f^{p/2} \nabla \rho$ and rewrite further,
\begin{align*}
\int_{\mathbb{R}^d} \rho^2 a |\nabla f^{p/2}|^2 \;dv = &\int  a |\nabla (\rho f^{p/2})|^2\;dv + \int  f^p a|\nabla \rho|^2  \;dv \\
&\;\;\;\;-2 \int  f^{p/2}a(\nabla (\rho f^{p/2}),\nabla \rho) \;dv,\\
\int_{\mathbb{R}^d}af^{p/2}(\nabla f^{p/2},\nabla \rho^2)\;dv & = 2\int_{\mathbb{R}^d}af^{p/2}(\nabla (\rho f^{p/2}),\nabla \rho)\;dv-2\int_{\mathbb{R}^d}af^p|\nabla \rho|^2\;dv.
\end{align*}
In conclusion, 
\begin{align*}
  \frac{d}{dt}\int_{\mathbb{R}^d}\rho^2 f^p\;dv & = -\frac{4(p-1)}{p}\int_{\mathbb{R}^d}a|\nabla (\rho f^{p/2})|^2\;dv\\
    & \;\;\;\;+\int_{\mathbb{R}^d}(4(p-\frac{(p-1)}{p})|\nabla \rho|^2- \Delta \rho^2) af^p\;dv\\
    & \;\;\;\;+(p-1)\int_{\mathbb{R}^d}(-\Delta a)\rho^2 f^p\;dv-4(p-2\frac{p-1}{p} )\int_{\mathbb{R}^d}af^{p/2}(\nabla (\rho f^{p/2}),\nabla \rho)\;dv.
\end{align*}

\end{proof}

Since for Theorem \ref{theorem:isotropic global smooth solutions} we only consider $p$'s with $p >  \frac{d}{d+\gamma+2}$, for the rest of this section we will always assume that $p> \frac{d}{d+\gamma+2}$. This has the added benefit of simplifying some of the constants since we are now bounded away from $1$ (note that for $\gamma<-2$, we have $\frac{d}{d+\gamma+2}>1$).
\begin{prop}\label{prop:energy inequality}
  Let $\rho \in C^2_c(B_R)$. Given any three times $T_1<T_2<T_3$ in $[0,T]$ the quantity
  \begin{align}
    \sup \limits_{(T_2,T_3)}\int_{\mathbb{R}^d} \rho^2 f^p\;dv \;&+\;   \frac{(p-1)}{p} \int_{T_2}^{T_3}\int_{\mathbb{R}^d}  a[f] |\nabla (\rho f^{p/2})|^2 \;dvdt \nonumber
  \end{align}
  is not greater than 
  \begin{align} \label{Moser_gen}
    \left(\frac{1}{T_2-T_1}+C_1\right)\int_{T_1}^{T_3}\int \rho^2 f^p\;dvdt + C(d,\gamma)p^2\int_{T_1}^{T_3}\int_{\mathbb{R}^d} a[f]  f^p (|\nabla \rho|^2 +|\Delta \rho^2|) \;dvdt,
  \end{align}
  where $C_1 = C_1(\fin,R,p)$.

\end{prop}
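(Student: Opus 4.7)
The plan is to start from the pointwise-in-time energy identity of Proposition \ref{prop:energy identity}, dispose of its three error terms, and then integrate against a time cutoff to pass from the resulting differential inequality to the claimed integrated estimate. The only error term that truly couples $f$ through a singular integral is $(p-1)\int(-\Delta a[f])\rho^2 f^p\,dv$. Using \eqref{eqn:normalization constant for a} I rewrite it as $(p-1)(-2-\gamma)\int h[f]\,(\rho f^{p/2})^2\,dv$ and apply the $\varepsilon$-Poincar\'e inequality of Corollary \ref{cor:Poincare via Lp bound} with $\phi=\rho f^{p/2}$, which is admissible because $\rho f^{p/2}$ is supported in $B_R$. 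Choosing $\varepsilon$ of order $1/p$ absorbs the $\varepsilon\int a[f]|\nabla(\rho f^{p/2})|^2\,dv$ piece into half of the dissipation $-\frac{4(p-1)}{p}\int a[f]|\nabla(\rho f^{p/2})|^2\,dv$ coming from Proposition \ref{prop:energy identity}; what remains is a term $\tilde C_1(\fin,R,p)\int a[f]\rho^2 f^p\,dv$, where $\tilde C_1$ picks up a power of $p$ from $\varepsilon^{-2/\eta}$ times the constant $C(\fin,R)$ of Corollary \ref{cor:Poincare via Lp bound}.

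The cross term $c_2(p)\int a[f]f^{p/2}(\nabla(\rho f^{p/2}),\nabla \rho)\,dv$ is handled by Cauchy-Schwarz with parameter $(p-1)/p$: it produces $\frac{p-1}{p}\int a[f]|\nabla(\rho f^{p/2})|^2\,dv$ (absorbed into the remaining dissipation) together with $\frac{pc_2(p)^2}{4(p-1)}\int a[f]f^p|\nabla\rho|^2\,dv$. Since $c_1(p)\lesssim p$ and $c_2(p)^2/(p-1)\lesssim p^2$, the term $\int(c_1(p)|\nabla\rho|^2-\Delta\rho^2)a[f]f^p\,dv$ together with this second piece fits into $C(d,\gamma)p^2\int a[f]f^p(|\nabla\rho|^2+|\Delta\rho^2|)\,dv$, matching the stated form. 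The residual $\tilde C_1\int a[f]\rho^2 f^p\,dv$ from the previous step is dominated by $\tilde C_1\|a[f]\|_{L^\infty([0,T]\times\mathbb{R}^d)}\int \rho^2 f^p\,dv$; since $f\in L^\infty(L^p)$ for some $p>\frac{d}{d+\gamma+2}$ on the range of interest, Remark \ref{remark:a Linfinity bound} bounds $\|a[f]\|_{L^\infty}$ by a constant depending on $\fin$, and the whole factor gets swallowed into a final $C_1=C_1(\fin,R,p)$.

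After these absorptions I have a pointwise inequality of the form $g'(t)+\frac{p-1}{p}A(t)\leq C_1 g(t)+B(t)$ with $g(t)=\int \rho^2 f^p\,dv$, $A(t)=\int a[f]|\nabla(\rho f^{p/2})|^2\,dv$, and $B(t)=C(d,\gamma)p^2\int a[f]f^p(|\nabla\rho|^2+|\Delta\rho^2|)\,dv$. To convert this into a supremum-plus-integral bound on $(T_2,T_3)$ while losing only $1/(T_2-T_1)$, I multiply by a Lipschitz cutoff $\chi(t)$ with $\chi(T_1)=0$, $\chi\equiv 1$ on $[T_2,T_3]$, and $0\leq \chi'\leq 1/(T_2-T_1)$. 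Integrating the product rule $(\chi g)'=\chi' g+\chi g'$ from $T_1$ to any $t^*\in(T_2,T_3)$ and using $\chi A\geq 0$ yields
\begin{align*}
g(t^*)+\int_{T_2}^{t^*}\tfrac{p-1}{p}A(s)\,ds\leq \frac{1}{T_2-T_1}\int_{T_1}^{T_2}g\,ds+C_1\int_{T_1}^{T_3}g\,ds+\int_{T_1}^{T_3}B\,ds.
\end{align*}
Taking the supremum over $t^*\in(T_2,T_3)$ in one copy of this inequality and setting $t^*=T_3$ in another, then summing (and absorbing a factor of $2$ into the constants) produces \eqref{Moser_gen}. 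The main bookkeeping challenge, rather than a genuine obstacle, is keeping the $p$-dependence explicit throughout so that the final coefficient of $\int a[f]f^p(|\nabla\rho|^2+|\Delta\rho^2|)$ comes out as $O(p^2)$; everything else is standard energy-method manipulation.
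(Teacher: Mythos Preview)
Your proposal is correct and follows essentially the same route as the paper: start from the energy identity, absorb the cross term via Young/Cauchy--Schwarz with parameter proportional to $(p-1)/p$, handle the $(-\Delta a[f])$ term through the $\varepsilon$-Poincar\'e inequality with $\varepsilon\sim 1/p$, and then pass to the time-integrated estimate. The only cosmetic differences are that the paper absorbs the cross term before applying Corollary \ref{cor:Poincare via Lp bound} (you do it after), and the paper performs the time integration by integrating from $t_1$ to $t_2$, taking the sup in $t_2\in(T_2,T_3)$ and averaging in $t_1\in(T_1,T_2)$, whereas you use an equivalent Lipschitz time cutoff; your explicit invocation of Remark \ref{remark:a Linfinity bound} to strip the weight $a[f]$ from the residual $\int a[f]\rho^2 f^p\,dv$ is exactly the step the paper performs implicitly.
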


\begin{proof}

Take the identity in Proposition \ref{prop:energy identity}. Per Young's inequality, for every $\varepsilon>0$ we have  
\begin{align*}
  2af^{p/2}|(\nabla (\rho f^{p/2}),\nabla \rho)| \leq \varepsilon a|\nabla (\rho f^{p/2})|^2+ \varepsilon^{-1}af^p|\nabla \rho|^2. 
\end{align*}
For $\varepsilon = \frac{2(p-1)}{p}\frac{1}{2(p-2\frac{p-1}{p})}$ in particular, it follows that
\begin{align*}
  -4(p-2\frac{p-1}{p})af^{p/2}(\nabla (\rho f^{p/2}),\nabla \rho) \leq \frac{2(p-1)}{p}a|\nabla (\rho f^{p/2})|^2+ 2(p-2\frac{p-1}{p})^2\frac{p-1}{p}af^p|\nabla \rho|^2. 
\end{align*}
Therefore, 
\begin{align*}
  \frac{d}{dt}\int_{\mathbb{R}^d}\rho^2 f^p\;dv & \leq -\frac{2(p-1)}{p}\int_{\mathbb{R}^d}a|\nabla (\rho f^{p/2})|^2\;dv\\
    & \;\;\;\;+\int_{\mathbb{R}^d}(4(p-\frac{(p-1)}{p})|\nabla \rho|^2- \Delta \rho^2) af^p\;dv\\
    & \;\;\;\;+(p-1)\int_{\mathbb{R}^d}(-\Delta a)\rho^2 f^p\;dv+2(p-2\frac{p-1}{p})^2\frac{p}{2(p-1)}\int_{\mathbb{R}^d}af^p|\nabla \rho|^2\;dv.
\end{align*}
Combining various terms, this inequality can be written more succinctly as
\begin{align*}
  & \frac{d}{dt}\int_{\mathbb{R}^d}\rho^2 f^p\;dv +\frac{2(p-1)}{p}\int_{\mathbb{R}^d}a|\nabla (\rho f^{p/2})|^2\;dv \\
    & \;\;\;\;\leq \int_{\mathbb{R}^d}(C(p)|\nabla \rho|^2- \Delta \rho^2) af^p\;dv+(p-1)\int_{\mathbb{R}^d}(-\Delta a)\rho^2 f^p\;dv,
\end{align*}
where $C(p) = 2(p-2\frac{p-1}{p})^2\frac{p}{2(p-1)}+4(p-\frac{p-1}{p})$. Since $p\geq \frac{d}{d+\gamma+2}>2$, it is elementary that
\begin{align*}
  C(p) \leq C(d,\gamma)p^2.
\end{align*}
Now we apply Corollary \ref{cor:Poincare via Lp bound} with $\varepsilon = \min\{\frac{1}{p},\eta(1)\}$.
this yields
\begin{align*}
  \frac{d}{dt}\int_{\mathbb{R}^d} \rho^2 f^p\;dv &+ \frac{p-1}{p}  \int_{\mathbb{R}^d}  a |\nabla (\rho f^{p/2})|^2 \;dv \\
  & \leq \int_{\mathbb{R}^d} (C(d,\gamma)p^2|\nabla \rho|^2-\Delta \rho^2)af^p\;dv + C(\fin,\gamma,p_0,R)p^m\int_{\mathbb{R}^d} \rho^2 f^p\;dv .
\end{align*}
Integrate now in $t\in (t_1,t_2)$ and take, on both sides, the sup with respect to $t_2$ in $ (T_2, T_3)$ and average with respect to $t_1$ in $(T_1, T_2)$. From here it results that
\begin{align*}
  \sup \limits_{(T_2,T_3)}\int_{\mathbb{R}^d} \rho^2 f^p\;dv\;+\;   \frac{(p-1)}{p} \int_{T_2}^{T_3}\int_{\mathbb{R}^d} a|\nabla (\rho f^{p/2})|^2 \;dvdt
\end{align*}
is no larger than
\begin{align*}
  \left(\frac{1}{T_2-T_1}+{C(\fin,\gamma,p_0,R)p^m}\right)\int_{T_1}^{T_3}\int_{\mathbb{R}^d} \rho^2 f^p\;dvdt+C(d,\gamma)p^2\int_{T_1}^{T_3}\int_{\mathbb{R}^d} a  f^p (|\nabla \rho|^2+|\Delta \rho^2|) \;dvdt.
\end{align*}

\end{proof}

All that remains of the proof of Theorem \ref{theorem:isotropic global smooth solutions} is covered by the following lemma. This lemma amounts to an $L^p \to L^\infty$ estimate in the spirit of De Giorgi-Nash-Moser theory, following Moser's approach. In what follows, recall that the exponent $q= 2(1+\tfrac{2}{d})$ was defined in \eqref{eqn:q and m} and that $p > \frac{d}{d+\gamma+2}$. 
\begin{lemma}\label{lem:Moser iteration}
  Let $R\geq 1$ and $T>0$, then for any $v\in B_{R}$ and $t\in(T/2,T)$ we have
  \begin{align*}
    f(v,t) \leq C(\fin,p,\gamma,R,T)\left (1+\frac{1}{T} \right )^{\frac{1}{p}\frac{q}{q-2} }\left ( \int_{T/4}^T\int_{B_{2R}}a[f]f^p\;dvdt\right )^{\frac{1}{p}}.
  \end{align*}

\end{lemma}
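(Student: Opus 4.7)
\emph{Proof plan.} The lemma is an $L^p\to L^\infty$ estimate obtained by a standard Moser iteration in which $a[f]$ plays the role of the elliptic coefficient. Introduce dyadic sequences
\begin{equation*}
R_k := R(1+2^{-k}),\quad T_k := \tfrac{T}{2}-\tfrac{T}{4}\cdot 2^{-k},\quad p_k := p\bigl(\tfrac{q}{2}\bigr)^k,
\end{equation*}
so $R_0=2R$, $R_k\downarrow R$, $T_0=T/4$, $T_k\uparrow T/2$, $T_{k+1}-T_k=T\cdot 2^{-k-3}$, and $p_k\to\infty$ geometrically. Pick $\rho_k\in C^2_c(B_{R_k})$ with $\rho_k\equiv 1$ on $B_{R_{k+1}}$ and $|\nabla\rho_k|^2+|\Delta\rho_k^2|\le C\, 2^{2k}/R^2$, and set
\begin{equation*}
A_k := \int_{T_k}^T\int_{B_{R_k}}a[f]\,f^{p_k}\,dv\,dt.
\end{equation*}
The objective is to establish a geometric recursion $A_{k+1}\le D_k A_k^{q/2}$ with $\log D_k$ growing at most linearly in $k$, then to iterate and pass $k\to\infty$.

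To derive the recursion, apply Corollary~\ref{cor:sob_space_time} to $\phi=\rho_k f^{p_k/2}$ on the time interval $(T_{k+1},T)$. Using $\rho_k^q f^{p_k q/2}\ge \mathbf{1}_{B_{R_{k+1}}}f^{p_{k+1}}$, this gives
\begin{equation*}
A_{k+1}\le C\left(\int_{T_{k+1}}^T\!\int a[f]\,|\nabla(\rho_k f^{p_k/2})|^2\,dv\,dt+\sup_{(T_{k+1},T)}\int\rho_k^2 f^{p_k}\,dv\right)^{q/2}.
\end{equation*}
By Proposition~\ref{prop:energy inequality} with $T_1=T_k$, $T_2=T_{k+1}$, $T_3=T$, the expression in parentheses is dominated by
\begin{equation*}
\bigl(\tfrac{2^{k+3}}{T}+C_1\bigr)\!\int_{T_k}^T\!\!\int\rho_k^2 f^{p_k}\,dv\,dt+C(d,\gamma)p_k^2\,\tfrac{C\,2^{2k}}{R^2}\!\int_{T_k}^T\!\!\int a[f]\,f^{p_k}\,dv\,dt.
\end{equation*}
The unweighted first integral is not directly $A_k$, but Corollary~\ref{cor:lower bound a} yields $a[f]\ge\ell\,\langle 2R\rangle^{2+\gamma}$ on $B_{2R}$, so $\int\!\int\rho_k^2 f^{p_k}\le C(\fin,\gamma,p,R,T)\,A_k$. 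Combining, one obtains the recursion
\begin{equation*}
A_{k+1}\le D_k A_k^{q/2},\qquad D_k:=C(\fin,\gamma,p,R)\bigl(1+\tfrac{1}{T}\bigr)^{q/2}2^{kq}p_k^{q}.
\end{equation*}

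Taking $1/p_{k+1}$-th powers and using $p_{k+1}=(q/2)p_k$ (so that $(q/2)/p_{k+1}=1/p_k$), the recursion becomes $A_{k+1}^{1/p_{k+1}}\le D_k^{1/p_{k+1}}A_k^{1/p_k}$; iterating from $k=0$ gives $A_k^{1/p_k}\le\prod_{j=0}^{k-1}D_j^{1/p_{j+1}}\cdot A_0^{1/p}$. The product converges because $\log D_j=O(j)$ while $1/p_{j+1}$ decays geometrically, and the identity $\sum_{j\ge 0}1/p_{j+1}=2/(p(q-2))$ produces exactly the exponent $\frac{1}{p}\frac{q}{q-2}$ on the $(1+1/T)$ factor promised by the statement. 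Finally, since $a[f]$ is bounded above on $B_{2R}$ by Remark~\ref{remark:a Linfinity bound} and below by Corollary~\ref{cor:lower bound a}, $a[f]^{1/p_k}\to 1$ uniformly on $B_{R_k}$, whence $\liminf_k A_k^{1/p_k}\ge\|f\|_{L^\infty(B_R\times(T/2,T))}$, concluding the proof. The only real book-keeping challenge is tracking how the $1/(T_{k+1}-T_k)\sim 2^k/T$ contributions compound through the $q/2$ power of the Sobolev step and the geometric sum of $1/p_{k+1}$ to produce the stated $T$-dependence; otherwise the iteration is routine and requires no new analytic input beyond the ingredients already developed in Sections~\ref{sec:weighted ineq}--\ref{sec:Moser iteration}.
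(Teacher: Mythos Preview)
Your proposal is correct and follows essentially the same Moser iteration as the paper's own proof: the dyadic sequences $R_k,T_k,p_k$, the cutoffs $\rho_k$, the use of the space--time Sobolev inequality (Corollary~\ref{cor:sob_space_time}) followed by the energy inequality (Proposition~\ref{prop:energy inequality}), the conversion of the unweighted $\int\rho_k^2 f^{p_k}$ term into a weighted one via the lower bound on $a[f]$, and the summation of the geometric series to extract the exponent $\tfrac{1}{p}\tfrac{q}{q-2}$ are all exactly as in the paper. The only cosmetic differences are that the paper works with $E_n:=\bigl(\int\rho_n^q a[f]f^{p_n}\bigr)^{1/p_n}$ rather than your $A_k$, and applies the energy inequality with the cutoff $\rho_{n+1}$ rather than $\rho_n$; neither choice affects the argument. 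One small point worth tracking explicitly: the constant $C_1=C_1(\fin,R,p)$ in Proposition~\ref{prop:energy inequality} depends on the exponent and in fact grows like a power of $p_k$ (see the proof of that proposition), so it should be absorbed into the $p_k^q$ factor in $D_k$; this does not disturb the iteration since $\log D_k$ remains $O(k)$.
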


\begin{proof}
  We introduce the sequences
  \begin{align*}
    T_n := \frac{1}{4}\left ( 2- \frac{1}{2^n} \right )T,\;\;R_n := \left(1 + \frac{1}{2^n}\right)R,\;\; p_n := p \left (\frac{q}{2} \right)^n,\;\;\forall\;n\in\mathbb{N}.
  \end{align*}
  We also pick a sequence of functions $\rho_{n} \in C^2_c(B_{R_n}(0))$ such that
  \begin{align}\label{eqn:iteration cut off functions}	
 \left\{ \begin{array}{ll}
    & 0\leq \rho_n \leq 1\textnormal{ in } B_{R_{n}}(0)\setminus B_{R_{n+1}}(0),\;\; \rho_{n} \equiv 1 \textnormal{ in } B_{R_{n+1}}(0),\\
    & |\nabla \rho_{{n+1}}| \leq C(d)R^{-1}2^n,\\
    & |\Delta \rho^2_{n+1}| \leq C(d)R^{-2}4^n.
 \end{array} \right.
\end{align}
Now, for each $n\ge 0$, let $E_n$ denote the quantity,
\begin{align*}
  E_n := \left ( \int_{T_n}^T\int_{\mathbb{R}^d} \rho_{n}^q a[f]f^{p_n}\;dvdt\right )^{\frac{1}{p_n}}.
\end{align*}
As it is standard for divergence elliptic equations, we are going to derive a recursive relation for $E_n$. To do this, first note $E_{n+1}$ may be written as,
\begin{align*}
  E_{n+1}^{p_{n}} = \left ( \int_{T_{n+1}}^T\int_{\mathbb{R}^d} a[f] \left ( \rho_{{n+1}} f^{\frac{p_{n}}{2}}\right )^{q}\;dvdt \right )^{\frac{2}{q}}.
\end{align*}
Thanks to the space-time inequality \eqref{sob_space_time} we have  
\begin{align*}
  C(d,\gamma)^{-1}E_{n+1}^{p_n} \leq  \sup \limits_{(T_{n+1},T)} \left \{ \int_{\mathbb{R}^d} \rho_{n+1}^2 f^{p_n}(t)\;dv \right \}+ \frac{(p_n-1)}{p_n}\int_{T_{n+1}}^{T}\int_{\mathbb{R}^d} a[f] |\nabla (\rho_{n+1} f^{\frac{p_n}{2}})|^2\;dvdt. 
\end{align*}
Then, the energy inequality from Proposition \ref{prop:energy inequality} says that $C(d,\gamma)^{-1}E_{n+1}^{p_n}$ is no larger than
\begin{align*}
  &\left( \frac{2^{n+2}}{T}+ C(\fin,\gamma,R)p_n^m \right) \int_{T_n}^{T}\int_{\mathbb{R}^d} \rho_{n+1}^2f^{p_n}\;dvdt\\
  & +C(d,\gamma)p_n^2\int_{T_n}^{T}\int_{\mathbb{R}^d}   a[f]f^{p_n}(|\nabla \rho_{n+1}|^2+|\Delta \rho_{n+1}^2|) \;dvdt.
\end{align*}
Note that the first integral is no larger than
\begin{align*}
  \left ( \frac{2^{n+2}}{T}+C(\fin,\gamma,p_0,R)p_n^m \right )\frac{1}{\min\limits_{B_R\times (0,T)} a[f]}\int_{T_n}^T\int_{\mathbb{R}^d}\rho_{n}^q a f^{p_n}\;dv.
\end{align*}
Next, again thanks to $\rho_{n} \equiv 1$ in the support of $\rho_{n+1}$, and in particular
\begin{align*}
  |\nabla \rho_{n+1}|^2 \leq CR^{-2}4^n \rho_{n}^q,\;|\Delta \rho_{n+1}^2| \leq CR^{-2}4^n \rho_{n}^q,
\end{align*}
so the second integral above is no larger than
\begin{align*}
  C(d,\gamma)p_n^2R^{-2}4^n\int_{T_n}^T\int_{\mathbb{R}^d}a[f]\rho_n^qf^{p_n}\;dvdt.
\end{align*}
In conclusion, 
\begin{align*}
  E_{n+1}\leq C\left [ \left (\frac{2^{n+2}}{T}+C(\fin,\gamma,R)p_n^m \right )C(\fin,T)R^{-2-\gamma}  + C(d,\gamma)p_n^2R^{-2}4^n \right ]^{\frac{1}{p_n}}E_n.
\end{align*}

Set
\begin{align*}
  b & = \max\{4q,(q/2)^{m}\},\\
  m & = C\left [ (T^{-1}+C(\fin,\gamma,R)p^m)C(\fin,R^{|2+\gamma|})+C(d,\gamma)p^2R^{-2}\right ].
\end{align*}
Then
\begin{align*}
 E_{n+1}\leq b^{\frac{n}{p_n}}m^{\frac{1}{p_n}}E_n.
\end{align*}
This recursive relation, and a straightforward induction argument, yield that
\begin{align*}
  E_{n} \leq b^{ \sum \limits_{k=0}^{n-1}\frac{k}{p_k}}m^{\sum \limits_{k=0}^{n-1}\frac{1}{p_k} } E_0.
\end{align*}
Since
\begin{align*}
  & \sum \limits_{k=0}^{n-1}\frac{k}{p_k} = \frac{1}{p}\sum \limits_{k=0}^{n-1}k\left ( \frac{2}{q} \right)^k \leq \frac{1}{p}\sum \limits_{k=0}^\infty k \left ( \frac{2}{q}\right )^k = \frac{1}{p}\frac{2q}{(q-2)^2},\\
  &   \sum \limits_{k=0}^{n-1}\frac{1}{p_k} = \frac{1}{p}\sum \limits_{k=0}^{n-1}\left ( \frac{2}{q} \right)^k \leq \frac{1}{p}\sum \limits_{k=0}^\infty \left ( \frac{2}{q}\right )^k = \frac{1}{p}\frac{q}{q-2},
\end{align*}
we conclude that
\begin{align}\label{eqn:Moser iteration Lpn estimate for all n}
  E_n \leq b^{\frac{1}{p}\frac{2q}{(q-2)^2}} m^{\frac{1}{p}\frac{q}{q-2}}E_0.
\end{align}
Observe that
\begin{align*}
  E_0 \leq \left( \int_{T/4}^T \int_{B_{2R}(0)} a[f]f^p\;dvdt \right)^{1/p}.
\end{align*}	
Now, since $\eta_n\geq 1$ in $B_{R}$ and $T_n\leq T/2$ for all $n$, it follows that
\begin{align*}
E_n \geq \left ( \int_{T/2}^T\int_{B_{2R}} a[f]f^{p_n}\;dvdt \right )^{\frac{1}{p_n}}.
\end{align*}  
Considering that $a>0$ everywhere, it follows that 
\begin{align*}
  \limsup \limits_{n \to \infty} E_n \geq \| f\|_{L^\infty(B_{R}\times (T/2,T))}.
\end{align*}
This, together with \eqref{eqn:Moser iteration Lpn estimate for all n} concludes the proof of the lemma, and of Theorem \ref{theorem:isotropic global smooth solutions}.

\end{proof}

\bibliography{landaurefs}
\bibliographystyle{plain}

\end{document}